\title{Decomposability of Nonnegative $r$-Potent Operators on $\mathcal{L}^2(\mathcal{X})$}
\author{Rashmi Sehgal Thukral}
\address{Department of Mathematics, Jesus and Mary College, University of Delhi, Chanakyapuri, New Delhi 110021}
\email{rashmi.sehgal@yahoo.co.in}
\author{Alka Marwaha}
\address{Department of Mathematics, Jesus and Mary College, University of Delhi, Chanakyapuri, New Delhi 110021}
\email{alkasamta@gmail.com}
\newtheoremstyle{theorem}
  {10pt}		  
  {10pt}  
  {\sl}  
  {\parindent}     
  {\bf}  
  {. }    
  { }    
  {}     
\theoremstyle{theorem}
\newtheorem{theorem}{Theorem}
\newtheorem{corollary}[theorem]{Corollary}
\newtheoremstyle{defi}
  {10pt}		  
  {10pt}  
  {\rm}  
  {\parindent}     
  {\bf}  
  {. }    
  { }    
  {}     
\theoremstyle{defi}
\newtheorem{definition}[theorem]{Definition}
\newtheorem{proposition}[theorem]{Proposition}
\newtheorem{lemma}[theorem]{Lemma}
\begin{document}


\maketitle
\paragraph{\bf Abstract:}
We investigate the decomposability of nonnegative compact $r$-potent operators on a separable Hilbert space $\mathcal{L}^2(\mathcal{X})$. We  provide a constructive algorithm to prove that basis functions of range spaces of nonnegative $r$-potent operators can be chosen to be all nonnegative and mutually orthogonal. We use this orthogonality to establish that nonnegative compact $r$-potent operators with range spaces of dimension strictly greater than $r-1$ are decomposable. \\

\paragraph{\bf AMS Subject Classification:} 47A15, 47B07
\paragraph{\bf Key Words:} Decomposition, $r$-Potent Operators, Compact\\

\section{Introduction}
An operator $\mathbf{A}$ is said to be {\it idempotent} if it satisfies $\mathbf{A}^2=\mathbf{A}$. An operator $\mathbf{A}$ on the Hilbert space $\mathcal{L}^2(\mathcal{X})$ is said to be {\it decomposable}  \cite{ColojoaraBook,RadjaviBook} if there exists a nontrivial standard subspace of $\mathcal{L}^2(\mathcal{X})$ that is invariant under $\mathbf{A}$. Marwaha \cite{MarwahaAndRadjavi} showed that  nonnegative {\it idempotent} operators of rank greater than one are decomposable in finite dimensions. Marwaha  \cite{MarwahaAndRadjaviInfiniteDimensions} further established that nonnegative idempotent operators with range spaces of dimension greater than one are decomposable in infinite dimensions. The results in \cite{MarwahaAndRadjavi} (finite-dimension case) for idempotent operators were generalised by Thukral and Marwaha to $r$-potent operators in \cite{ThukralAndMarwaha} (recall that an operator $\mathbf{A}$ is said to be $r$-potent \cite{McCloskeyFirstPaper} if $\mathbf{A}^r=\mathbf{A}$, where $r$ is a positive integer). Specifically, Thukral and Marwaha \cite{ThukralAndMarwaha} showed that all  nonnegative $r$-potent operators of rank greater than $r-1$ are decomposable in finite dimensions. A similar generalisation from idempotent operators to $r$-potent operators  in the infinite-dimension case \cite{MarwahaAndRadjaviInfiniteDimensions} however does not exist in the literature. In this paper, our main contribution is to provide this generalisation  of results in \cite{MarwahaAndRadjaviInfiniteDimensions} for idempotent operators to the $r$-potent case. It should be noted that proof of decomposability for $r$-potent operators in the finite-dimension case \cite{ThukralAndMarwaha} depends critically on the Perron-Frobenius Theorem \cite{CarlMeyer} for matrices. Since there is no direct analogue of Perron-Frobenius theorem in the infinite-dimension case, generalisation of  results in \cite{MarwahaAndRadjaviInfiniteDimensions} to the $r$-potent case is completely nontrivial. In fact, a substantial portion of this paper is devoted to developing new tools and techniques that are eventually deployed to prove the decomposability of $r$-potent operators in the infinite-dimension case. The rest of this  paper is organised as follows. Section~\ref{Sec:Preliminaries} contains the notations, definitions and preliminaries used in this paper. In Section~\ref{Sec:CharacterizationRangeSpace-r-PotentOperators}, we provide a novel constructive algorithm to show that the basis functions of the range space of a nonnegative $r$-potent operator can be chosen to be all nonnegative and mutually orthogonal. Finally, Section~\ref{Sec:DecomposabilityOf-r-PotentOperators} states the result on decomposability of $r$-potent operators and the corresponding proofs.


\section{Preliminaries}
\label{Sec:Preliminaries}
In this paper, $\mathcal{X}$ denotes a separable, locally compact Hausdorff space  and $\mu$ a Borel measure on $\mathcal{X}$. $\mathcal{L}^2(\mathcal{X})$ is the Hilbert space of (equivalence classes) complex-valued measurable and square integrable functions on $\mathcal{X}$. For sake of simplicity, we assume that $\mu(\mathcal{X})<\infty$. 

\begin{definition}\cite[p.~57]{JohnConwayBook}
A function $f\in \mathcal{L}^2(\mathcal{X})$ is said to be {\it nonnegative}, written as $f\geq 0$, if $\mu\{x\in\mathcal{X}:f(x)<0\}=0$.
\end{definition}
\begin{definition}\cite{RadjaviBook}
A {\it standard subspace} of $\mathcal{L}^2(\mathcal{X})$ is a norm-closed linear manifold in $\mathcal{L}^2(\mathcal{X})$ of the form
\begin{align*}
\mathcal{L}^2(U)=\{f\in\mathcal{L}^2(\mathcal{X}):f=0\ \text{a.e. on }U^c\}
\end{align*}
for some Borel subset $U$ of $\mathcal{X}$. This space is nontrivial if $\mu(U)\cdot\mu(U^c)>0$.
\end{definition}
\begin{definition}\cite{RadjaviBook}
An operator $\mathbf{A}$ on $\mathcal{L}^2(\mathcal{X})$ is said to be {\it decomposable} if there exists a nontrivial standard subspace of $\mathcal{L}^2(\mathcal{X})$ invariant under $\mathbf{A}$.  
\end{definition}
In this paper, we shall use the following equivalent definition of decomposability (provided as a proposition in \cite[p.~39]{MarwahaAndRadjaviInfiniteDimensions}):
\begin{definition}
\label{definitionOfDecomposabilityByAlkaMarwaha}
A nonnegative operator $\mathbf{A}$ on $\mathcal{L}^2(\mathcal{X})$ is decomposable if and only if there exists a Borel subset $U$ of $\mathcal{X}$ with $\mu(U)\cdot \mu(U^c)>0$ such that $
\langle\mathbf{A}\chi_U,\chi_{U^c}\rangle=0$, 
where $\chi_U=1$ on $\text{Supp}\ U$ and $\chi_U=0$ on $\text{Supp}\ U^c$ (Note that $U^c$ denotes the complement of $U$).
\end{definition}
\begin{definition}
Suppose $\mathcal{X}_1$ and $\mathcal{X}_2$ are Borel subsets of $\mathcal{X}$. An {\it operator} $\mathbf{A}$ from $\mathcal{L}^2(\mathcal{X}_1)$ to $\mathcal{L}^2(\mathcal{X}_2)$ is called {\it nonnegative} if $\mathbf{A}f\geq 0$ whenever $0\neq f\geq 0$ in $\mathcal{L}_2(\mathcal{X}_1)$. Similarly, $\mathbf{A}$ is called {\it positive} if $\mathbf{A}f> 0$ whenever $0\neq f\geq 0$ in $\mathcal{L}_2(\mathcal{X}_1)$.
\end{definition}
\begin{definition}
For any function $f$, we define the support of $f$ as $\text{Supp }f=\{x\in\mathcal{X}:f(x)\neq 0\}$. If $f$ is a member of $\mathcal{L}^2(\mathcal{X})$, then $\text{Supp }f$ is defined up to a set of measure zero. 
\end{definition}
\begin{definition}
Let $f,g$ be two nonnegative functions in $\mathcal{L}^2(\mathcal{X})$. Then, $\text{Supp }f$ and $\text{Supp }g$ are called nonoverlapping (or equivalently orthogonal) up to a set of measure zero if $\mu(\text{Supp }f\cap \text{Supp }g)=0$.		
\end{definition}
\begin{definition}
A function $f\in\mathcal{L}^2(\mathcal{X})$ is said to be mixed if 
\begin{align*}
f&=f^+-f^-, \\
 \text{where}\quad f^+&=\text{max}\{f,0\},\\
\text{and}\quad f^-&=\text{max}\{-f,0\},
\end{align*} 
are called the positive and negative parts of $f$, respectively, and the following two conditions are satisfied:
\begin{align*}
 \mu(\text{Supp}\ f^+)&>0 \\
 \mu(\text{Supp}\ f^-)&>0. 
\end{align*}
\end{definition}
\begin{definition}
An operator $\mathbf{A}$ is called {\it $r$-potent} if $\mathbf{A}^r=\mathbf{A}$, where $r$ is a positive integer.
\end{definition}
\begin{definition}\cite{KreyszigIntroductoryFunctionalAnalysisBook}
An operator $\mathbf{A}:\mathcal{L}^2(\mathcal{X}_1)\rightarrow \mathcal{L}^2(\mathcal{X}_2)$ is called a {\it compact linear operator} (or completely continuous linear operator) if $\mathbf{A}$ is linear and if for every bounded subset $\mathcal{M}$ of $\mathcal{L}^2(\mathcal{X}_1)$, the image $\mathbf{A}(\mathcal{M})$ is relatively compact, that is, the closure $\overline{\mathbf{A}(\mathcal{M})}$ is a compact subset of $\mathcal{L}^2(\mathcal{X}_2)$.
\end{definition}
\begin{proposition}\cite{MarwahaAndRadjaviInfiniteDimensions}
Any two nonnegative functions in $\mathcal{L}^2(\mathcal{X})$ are mutually orthogonal if and only if they have nonoverlapping support sets up to a set of measure zero. In other words, if $f,g\geq 0$ in $\mathcal{L}^2(\mathcal{X})$,  then $\langle f,g\rangle=0$ if and only if $\mu(\text{Supp }f\cap \text{Supp }g)=0$.
\end{proposition}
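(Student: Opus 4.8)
The plan is to reduce the statement to the elementary measure-theoretic fact that a nonnegative integrable function has vanishing integral precisely when it vanishes almost everywhere. The key preliminary observation is that, since both $f$ and $g$ are real-valued and nonnegative, the inner product collapses to $\langle f,g\rangle=\int_{\mathcal{X}}fg\,d\mu$, where the integrand $fg$ is itself a nonnegative measurable function. The entire argument then hinges on the pointwise identity $\text{Supp}(fg)=\text{Supp }f\cap\text{Supp }g$ (valid because $f,g\geq 0$ force the product to be nonzero exactly where both factors are), so that controlling $\int_{\mathcal{X}}fg\,d\mu$ is the same as controlling $\mu(\text{Supp }f\cap\text{Supp }g)$.

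First I would dispatch the ($\Leftarrow$) direction. Assuming $\mu(\text{Supp }f\cap\text{Supp }g)=0$, I note that outside this null set at least one of $f,g$ vanishes, whence $fg=0$ almost everywhere. Integrating gives $\langle f,g\rangle=\int_{\mathcal{X}}fg\,d\mu=0$ at once. For the ($\Rightarrow$) direction I would start from $\langle f,g\rangle=\int_{\mathcal{X}}fg\,d\mu=0$ with $fg\geq 0$ a.e., and invoke the standard fact that a nonnegative measurable function with vanishing integral must be zero almost everywhere. Hence $fg=0$ a.e., and combining this with the identity $\{x:(fg)(x)\neq 0\}=\text{Supp }f\cap\text{Supp }g$ yields $\mu(\text{Supp }f\cap\text{Supp }g)=0$.

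The argument is essentially routine, and I do not anticipate a genuine obstacle. The only step deserving care is the equivalence $fg=0$ a.e. $\Longleftrightarrow\ \mu(\text{Supp }f\cap\text{Supp }g)=0$; the main subtlety is merely the bookkeeping with the ``up to a set of measure zero'' qualifications attached to the support sets, and the observation that the nonnegativity hypothesis on $f$ and $g$ is exactly what prevents cancellation in the integrand and thereby makes the support intersection the right object to track.
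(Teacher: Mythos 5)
Your argument is correct: reducing $\langle f,g\rangle$ to $\int_{\mathcal{X}}fg\,d\mu$ with $fg\geq 0$, identifying $\mathrm{Supp}(fg)$ with $\mathrm{Supp}\,f\cap\mathrm{Supp}\,g$, and invoking the fact that a nonnegative function has zero integral iff it vanishes a.e.\ is exactly the standard proof of this equivalence. The paper states this proposition without proof (it is quoted from the cited reference), so there is nothing to diverge from; your write-up is the argument one would expect, and it is complete.
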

\begin{proposition}
If $f_1,f_2,\ldots,f_n$ are orthogonal functions in $\mathcal{L}^2(\mathcal{X})$, that is, $\langle f_i,f_j\rangle=0$, for all  $i\neq j$, then $f_1,\ldots,f_n$ are linearly independent.
\end{proposition}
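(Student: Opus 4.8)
The plan is to use the standard orthogonality trick: assume a linear dependence relation and extract each coefficient by pairing against the corresponding function. First I would note the implicit hypothesis that each $f_i$ is nonzero, since a collection containing the zero function is never linearly independent and orthogonality is understood to be among nonzero functions. With that in hand, suppose there exist scalars $c_1,\ldots,c_n$ with $\sum_{i=1}^{n}c_if_i=0$.

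Next I would fix an arbitrary index $j\in\{1,\ldots,n\}$ and take the inner product of both sides of this relation with $f_j$. By linearity of the inner product in its first argument, this yields $\sum_{i=1}^{n}c_i\langle f_i,f_j\rangle=0$. Invoking the orthogonality hypothesis $\langle f_i,f_j\rangle=0$ for every $i\neq j$, all cross terms vanish and the sum collapses to the single surviving term $c_j\langle f_j,f_j\rangle=c_j\|f_j\|^2=0$.

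Because $f_j\neq 0$, its norm satisfies $\|f_j\|^2>0$, so I may divide to conclude $c_j=0$. Since $j$ was arbitrary, this shows $c_1=c_2=\cdots=c_n=0$, and hence the only linear combination of $f_1,\ldots,f_n$ equal to the zero function is the trivial one. Therefore $f_1,\ldots,f_n$ are linearly independent.

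This argument is entirely routine and presents no genuine obstacle; it is the classical fact that an orthogonal set of nonzero vectors in any inner product space is linearly independent. The sole point deserving attention is the nonvanishing of each $f_j$, which is exactly what licenses the division by $\|f_j\|^2$ in the final step.
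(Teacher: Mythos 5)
Your proof is correct and is the standard argument; the paper actually states this proposition without supplying any proof, so there is nothing to diverge from. Your one substantive observation --- that each $f_j$ must be nonzero for the division by $\|f_j\|^2$ to be licensed --- is a genuine (if minor) hypothesis the paper leaves implicit, and you handle it properly.
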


We next state a set of four lemmas (Lemma~\ref{lemma:BF0ImpliesB0} and \ref{LemmaNonNegativeElementInKernal} are known in the literature \cite{AlkaMarwahaThesis} while Lemma~\ref{LemmaRealAndImaginary} and \ref{LemmaCompactOperatorsHaveFiniteDimensionalRange} are our contributions) along with the corresponding proofs that would allow us to set the context of the problem solved in this paper.
\begin{lemma}
\label{lemma:BF0ImpliesB0}
Let $\mathbf{B}:\mathcal{L}^2(\mathcal{X}_1)\rightarrow\mathcal{L}^2(\mathcal{X}_2)$ be a nonnegative operator such that $\mathbf{B}f_o=0$ for some $f_o>0$ in $\mathcal{L}^2(\mathcal{X}_1)$. Then $\mathbf{B}=0$.
\end{lemma}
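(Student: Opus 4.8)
The plan is to exploit the order structure: a nonnegative linear operator is automatically monotone, so I would first propagate the single identity $\mathbf{B}f_o=0$ to all nonnegative functions dominated by integer multiples of $f_o$, then remove the domination hypothesis by truncation and a limiting argument, and finally reach arbitrary functions by splitting into nonnegative parts.

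First I would record that $\mathbf{B}$ is order-preserving: if $0\leq g\leq h$ in $\mathcal{L}^2(\mathcal{X}_1)$, then $h-g\geq 0$, so nonnegativity of $\mathbf{B}$ gives $\mathbf{B}h-\mathbf{B}g=\mathbf{B}(h-g)\geq 0$, that is, $\mathbf{B}g\leq\mathbf{B}h$ (the degenerate case $h-g=0$ being trivial). Applying this with $h=n f_o$ for a positive integer $n$ and using $\mathbf{B}(n f_o)=n\,\mathbf{B}f_o=0$, every $g$ with $0\leq g\leq n f_o$ satisfies $0\leq\mathbf{B}g\leq 0$, hence $\mathbf{B}g=0$.

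Next I would remove the domination hypothesis. For an arbitrary $g\geq 0$ in $\mathcal{L}^2(\mathcal{X}_1)$, set $g_n=\min\{g,\,n f_o\}$. Since $f_o>0$ almost everywhere, $n f_o\to\infty$ a.e., so $g_n\to g$ pointwise a.e.; as $0\leq g_n\leq g$ with $g\in\mathcal{L}^2(\mathcal{X}_1)$, the Dominated Convergence Theorem yields $g_n\to g$ in the $\mathcal{L}^2$-norm. Each $g_n$ obeys $0\leq g_n\leq n f_o$, so the previous step gives $\mathbf{B}g_n=0$, and continuity of $\mathbf{B}$ then forces $\mathbf{B}g=\lim_n\mathbf{B}g_n=0$. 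Thus $\mathbf{B}$ annihilates every nonnegative function. I would finish by passing to arbitrary $g\in\mathcal{L}^2(\mathcal{X}_1)$ through its real and imaginary parts, writing $g=(\mathrm{Re}\,g)^+-(\mathrm{Re}\,g)^-+i[(\mathrm{Im}\,g)^+-(\mathrm{Im}\,g)^-]$ as a combination of four nonnegative functions; linearity of $\mathbf{B}$ and the preceding step give $\mathbf{B}g=0$, whence $\mathbf{B}=0$.

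The main obstacle is exactly the passage from functions dominated by $f_o$ to general nonnegative functions: here the strict positivity $f_o>0$ a.e. is indispensable, since it guarantees $n f_o\uparrow\infty$ a.e. so that the truncations recover $g$, and here continuity of $\mathbf{B}$ is used to interchange the limit with the operator. I would flag that this tacitly uses $\mathbf{B}$ being bounded; if boundedness is not assumed at the outset, one may invoke the classical fact that a positive linear operator between Banach lattices is automatically continuous.
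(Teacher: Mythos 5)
Your proof is correct, but it takes a genuinely different route from the paper's. You work entirely on the domain side with the order structure: monotonicity of $\mathbf{B}$, the domination $0\leq g_n\leq nf_o$ for the truncations $g_n=\min\{g,nf_o\}$, dominated convergence to get $g_n\to g$ in norm, and continuity of $\mathbf{B}$ to conclude $\mathbf{B}g=0$ for every $g\geq 0$. The paper instead argues by duality on the codomain side: from $\mathbf{B}f_o=0$ it deduces $\langle f_o,\mathbf{B}^*g\rangle=0$ for every $g\geq 0$ in $\mathcal{L}^2(\mathcal{X}_2)$, then concludes from $f_o>0$ a.e. that $\mathbf{B}^*g=0$ for all $g\geq 0$, hence $\mathbf{B}^*=0$ and so $\mathbf{B}=0$. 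The paper's argument is shorter and needs no limiting process, but it presupposes the existence of the adjoint (hence boundedness) and quietly uses that $\mathbf{B}^*$ is again a nonnegative operator in the step from the vanishing integral $\int f_o(x)(\mathbf{B}^*g)(x)\,\mu(\mathrm{d}x)=0$ to the vanishing integrand $f_o\cdot(\mathbf{B}^*g)=0$ a.e. (one needs the integrand to have a single sign). Your argument never invokes the Hilbert-space adjoint, so it applies verbatim to positive operators between Banach lattices; you also make the continuity hypothesis explicit where the paper leaves it implicit, and your fallback — automatic continuity of positive linear operators between Banach lattices — is the correct classical fact to cite. Both proofs finish identically, by decomposing an arbitrary function into four nonnegative pieces to pass from $\mathbf{B}g=0$ on the positive cone to $\mathbf{B}=0$.
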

\begin{proof}
We note that 
\begin{align*}
&\mathbf{B}\mathbf{f}_o=0\\
\Rightarrow& \langle\mathbf{B}f_o,g\rangle=0,\qquad \text{for all $g\geq 0$ in $\mathcal{L}^2(\mathcal{X}_2)$}\\
\Rightarrow& \langle f_o,\mathbf{B}^*g\rangle=0\qquad \text{for all $g\geq 0$ in $\mathcal{L}^2(\mathcal{X}_2)$}\\
\Rightarrow &\int_{\mathcal{X}}f_o(x)(\mathbf{B}^*g)(x)\mu(\mathrm{d}x)=0\\
\Rightarrow &f_o(x)(\mathbf{B}^*g)(x)=0\qquad \text{a.e. on $\mathcal{X}_1$}.
\end{align*}
However, $f_o(x)>0$ a.e., and therefore
\begin{align*}
(\mathbf{B}^*g)(x)&=0\qquad \text{a.e. for all $g\geq0$}\\
\Rightarrow \mathbf{B}^*g&=0 \qquad\text{for all $g\geq 0$}\\
\Rightarrow \mathbf{B}^*&=0 \\
\Rightarrow \mathbf{B}&=0.
\end{align*}
\end{proof}

\begin{lemma}
\label{LemmaNonNegativeElementInKernal}
If the kernel of a nonnegative operator $\mathbf{A}$ (denoted by Ker$(\mathbf{A})$) contains a nonzero nonnegative function, then $\mathbf{A}$ is decomposable. 
\end{lemma}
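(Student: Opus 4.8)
The plan is to exhibit a single Borel set $U$ satisfying the orthogonality criterion of Definition~\ref{definitionOfDecomposabilityByAlkaMarwaha}. Let $f\geq 0$, $f\neq 0$, be the function guaranteed to lie in $\mathrm{Ker}(\mathbf{A})$, and set $U=\mathrm{Supp}\,f$, so that $f>0$ on $U$, $f=0$ on $U^c$, and $\mu(U)>0$. I would split the argument according to whether $\mu(U^c)$ is zero or positive.

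If $\mu(U^c)=0$, then $f>0$ almost everywhere, so $f$ is a strictly positive function in the kernel of the nonnegative operator $\mathbf{A}$; Lemma~\ref{lemma:BF0ImpliesB0} then forces $\mathbf{A}=0$, which is trivially decomposable, any nontrivial Borel splitting of $\mathcal{X}$ satisfying the criterion.

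The substantive case is $\mu(U^c)>0$, where $\mu(U)\cdot\mu(U^c)>0$ makes $U$ a genuine candidate, so the goal reduces to verifying $\langle \mathbf{A}\chi_U,\chi_{U^c}\rangle=0$. Since $\chi_U\geq 0$ gives $\mathbf{A}\chi_U\geq 0$, and $\chi_{U^c}\geq 0$, the proposition identifying orthogonality of nonnegative functions with non-overlapping supports shows this inner product vanishes exactly when $\mathbf{A}\chi_U=0$ a.e.\ on $U^c$; I would in fact establish the stronger statement $\mathbf{A}\chi_U=0$. The mechanism is the monotonicity built into nonnegativity: for real $g\leq h$ one has $\mathbf{A}(h-g)\geq 0$, hence $\mathbf{A}g\leq\mathbf{A}h$. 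Applying this to the truncations $g_n=\min(nf,1)$, which satisfy $0\leq g_n\leq nf$, yields $0\leq\mathbf{A}g_n\leq n\,\mathbf{A}f=0$, so $\mathbf{A}g_n=0$ for every $n$. As $n\to\infty$, $g_n\to\chi_U$ pointwise (since $nf\to\infty$ on $U$ while $f=0$ on $U^c$), and $|g_n|\leq 1\in\mathcal{L}^2(\mathcal{X})$ because $\mu(\mathcal{X})<\infty$, so $g_n\to\chi_U$ in $\mathcal{L}^2(\mathcal{X})$ by dominated convergence; continuity of $\mathbf{A}$ then gives $\mathbf{A}\chi_U=\lim_n\mathbf{A}g_n=0$.

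The main obstacle is precisely this last passage: one cannot simply dominate $\chi_U$ by a fixed multiple of $f$, because $f$ may decay to $0$ on $U$, so a limiting truncation argument is unavoidable and it relies on $\mathbf{A}$ being continuous (which holds here, since the operators of interest are compact, hence bounded). Once $\mathbf{A}\chi_U=0$ is in hand, $\langle \mathbf{A}\chi_U,\chi_{U^c}\rangle=0$ is immediate, and Definition~\ref{definitionOfDecomposabilityByAlkaMarwaha} certifies decomposability, completing both cases.
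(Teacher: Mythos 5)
Your proof is correct, and it lands on the same decomposing set as the paper: $U=\mathrm{Supp}\,f$ for the kernel element $f$. The difference is in how the vanishing of $\mathbf{A}$ on $U$ is obtained. The paper observes that $f$, viewed as an element of $\mathcal{L}^2(U)$, is strictly positive there, and then applies Lemma~\ref{lemma:BF0ImpliesB0} to the restriction $\mathbf{A}\big|_{\mathcal{L}^2(U)}$ to conclude in one line that $\mathbf{A}=0$ on all of $\mathcal{L}^2(U)$; that route goes through the adjoint $\mathbf{A}^*$. You instead reprove the one instance you need, $\mathbf{A}\chi_U=0$, by hand: monotonicity of a nonnegative operator, the truncations $g_n=\min(nf,1)$ squeezed between $0$ and $n\mathbf{A}f=0$, and dominated convergence plus boundedness of $\mathbf{A}$ to pass to the limit $\chi_U$. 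Your version is more self-contained (it never touches $\mathbf{A}^*$) and correctly isolates the real analytic subtlety, namely that $\chi_U$ need not be dominated by any fixed multiple of $f$; it is also more careful than the paper about the nontriviality requirement $\mu(U)\cdot\mu(U^c)>0$, explicitly disposing of the degenerate case $\mu(U^c)=0$, which the paper's proof passes over silently. The paper's version is shorter and reuses existing machinery, and yields the slightly stronger conclusion that $\mathbf{A}$ annihilates every function supported in $U$, though only $\mathbf{A}\chi_U=0$ is needed for Definition~\ref{definitionOfDecomposabilityByAlkaMarwaha}.
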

\begin{proof}
Let $h$ be a nonzero nonnegative function in $\mathrm{Ker}(\mathbf{A})$. Then,  $h>0$ on $\mathrm{Supp}\ h$,  implies that $\mathbf{A}h=0$ for $h>0$ in $\mathcal{L}^2(\mathrm{Supp}\ h)$. This however, due to Lemma~\ref{lemma:BF0ImpliesB0}, implies that $\mathbf{A}=0$ on $\mathcal{L}^2(\mathrm{Supp}\ h)$. Therefore, $\mathbf{A}$ is decomposable.
\end{proof}
The above lemma implies that any operator $\mathbf{A}$ which has a nonzero nonnegative function in its kernel is  decomposable. For the rest of this paper, we therefore assume that $\mathrm{Ker}(\mathbf{A})$ has no nonzero nonnegative functions. 

\begin{lemma}
\label{LemmaRealAndImaginary}
If a function $f$ belongs to the range space $R(\mathbf{A})$ of an $r$-potent operator $\mathbf{A}$, then both $\mathrm{Re}(f)$ and $\mathrm{Im}(f)$ also belong to $R(\mathbf{A})$. 
\end{lemma}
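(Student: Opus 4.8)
The plan is to exploit the standing nonnegativity of $\mathbf{A}$ to show that $\mathbf{A}$ commutes with complex conjugation, and then to use the fact that $R(\mathbf{A})$ is a linear subspace to split $f$ into its real and imaginary parts. The $r$-potency, somewhat surprisingly, plays no essential role here; the order structure of $\mathbf{A}$ does all the work.

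First I would establish the key structural fact: a nonnegative operator sends real-valued functions to real-valued functions. Given a real $h \in \mathcal{L}^2(\mathcal{X})$, I would write $h = h^+ - h^-$ using the positive/negative-part decomposition introduced above. Since $h^+, h^- \geq 0$, nonnegativity of $\mathbf{A}$ forces $\mathbf{A}h^+ \geq 0$ and $\mathbf{A}h^- \geq 0$, both real-valued, so $\mathbf{A}h = \mathbf{A}h^+ - \mathbf{A}h^-$ is real. Writing an arbitrary $g \in \mathcal{L}^2(\mathcal{X})$ as $g = \mathrm{Re}(g) + i\,\mathrm{Im}(g)$ and applying this to each real piece, I would then deduce $\overline{\mathbf{A}g} = \mathbf{A}\overline{g}$; that is, $\mathbf{A}$ commutes with conjugation.

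Next, given $f \in R(\mathbf{A})$, write $f = \mathbf{A}g$. Conjugation gives $\overline{f} = \overline{\mathbf{A}g} = \mathbf{A}\overline{g}$, so $\overline{f} \in R(\mathbf{A})$ as well. Since $R(\mathbf{A})$ is a linear subspace, I conclude that $\mathrm{Re}(f) = \tfrac{1}{2}(f + \overline{f})$ and $\mathrm{Im}(f) = \tfrac{1}{2i}(f - \overline{f})$ both lie in $R(\mathbf{A})$, which is exactly the claim.

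The main obstacle — indeed essentially the only conceptual content — is the first step: recognizing that it is nonnegativity, not $r$-potency, that makes the range closed under conjugation. For a general $r$-potent operator the statement fails: an idempotent ($r=2$) whose one-dimensional range is spanned by a genuinely complex function $u + iv$, with $u,v$ real and linearly independent, contains no nonzero real-valued function, so neither $\mathrm{Re}$ nor $\mathrm{Im}$ of $u+iv$ lies in its range. The argument must therefore route through the order structure of $\mathbf{A}$ rather than the algebraic identity $\mathbf{A}^r=\mathbf{A}$. Once conjugation-invariance is established, the conclusion is immediate from linearity; if one preferred to invoke $r$-potency explicitly, one could instead use the characterization $R(\mathbf{A}) = \{h : \mathbf{A}^{r-1}h = h\}$ together with the fact that $\mathbf{A}^{r-1}$, being a product of nonnegative operators, also commutes with conjugation.
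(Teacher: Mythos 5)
Your proof is correct, and it takes a somewhat different route from the paper's. The paper argues from the $r$-potent identity: since $\mathbf{A}^{r-1}=\mathbf{I}$ on $R(\mathbf{A})$, it writes $\mathbf{A}^{r-1}(\mathrm{Re}(f)+i\,\mathrm{Im}(f))=\mathrm{Re}(f)+i\,\mathrm{Im}(f)$ and then ``compares real and imaginary parts,'' citing only linearity of $\mathbf{A}^{r-1}$. That comparison is legitimate only if $\mathbf{A}^{r-1}$ maps real-valued functions to real-valued functions --- complex-linearity alone does not give this --- and that is exactly the point your first step supplies: nonnegativity forces $\mathbf{A}h^{\pm}\geq 0$, hence $\mathbf{A}$ (and so $\mathbf{A}^{r-1}$) preserves real functions and commutes with conjugation. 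So your argument makes explicit a hypothesis the paper uses tacitly, and your closing remark (that the lemma fails for an idempotent whose range is spanned by $u+iv$ with $u,v$ real and independent) confirms that the order structure, not $r$-potency, is what carries the proof. Your main line is also slightly more general: by writing $f=\mathbf{A}g$, deducing $\overline{f}=\mathbf{A}\overline{g}\in R(\mathbf{A})$, and taking the combinations $\tfrac12(f+\overline{f})$ and $\tfrac1{2i}(f-\overline{f})$, you get the conclusion for any nonnegative operator, whereas the paper's version is tied to the fixed-point description of $R(\mathbf{A})$. Either route is fine here, since the paper only ever applies the lemma to nonnegative $\mathbf{A}$; your alternative closing suggestion (use $R(\mathbf{A})=\{h:\mathbf{A}^{r-1}h=h\}$ plus conjugation-equivariance) is precisely the paper's proof with the missing justification filled in.
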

\begin{proof}
Since $f\in R(\mathbf{A})$ and $\mathbf{A}^{r-1}=\mathbf{I}$ on $R(\mathbf{A})$, we have
\begin{align}
\mathbf{A}^{r-1}f&=f\\
\label{Ar-1IntoRealAndImaginary}
\Rightarrow \mathbf{A}^{r-1}(\mathrm{Re}(f) + i\ \mathrm{Im}(f)) & = \mathrm{Re}(f) + i\ \mathrm{Im}(f)\\
\label{RealPartsEqual}
\Rightarrow \mathbf{A}^{r-1}(\mathrm{Re}(f))&=\mathrm{Re}(f)\quad \mathrm{and}\\
\label{ImaginaryPartsEqual}
\mathbf{A}^{r-1}(\mathrm{Im}(f))&=\mathrm{Im}(f), 
\end{align}
where Eqns.~(\ref{RealPartsEqual}) and (\ref{ImaginaryPartsEqual}) follow by comparing the real and imaginary parts of  Eqn.~(\ref{Ar-1IntoRealAndImaginary})  as $\mathbf{A}^{r-1}$ is a linear operator. However, Eqns. (\ref{RealPartsEqual}) and (\ref{ImaginaryPartsEqual}) also imply that both $\mathrm{Re}(f)$ and $\mathrm{Im}(f)$ belong to $R(\mathbf{A})$.
\end{proof}
Thanks to the above lemma, we shall restrict our focus in the rest of this paper to real functions only. 

\begin{lemma}
\label{LemmaCompactOperatorsHaveFiniteDimensionalRange}
A compact $r$-potent operator on a Hilbert space has a finite-dimensional range space.
\end{lemma}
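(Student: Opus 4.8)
The plan is to exploit the fact that $\mathbf{A}^{r-1}$ acts as the identity on $R(\mathbf{A})$, combined with the standard fact that the identity operator on an infinite-dimensional Hilbert space fails to be compact. First I would record the key identity: for any $f\in R(\mathbf{A})$, writing $f=\mathbf{A}g$, we get $\mathbf{A}^{r-1}f=\mathbf{A}^{r-1}\mathbf{A}g=\mathbf{A}^r g=\mathbf{A}g=f$, so $\mathbf{A}^{r-1}$ restricts to the identity on $R(\mathbf{A})$ (here one assumes $r\geq 2$, since $r=1$ imposes no condition on $\mathbf{A}$). Since $\mathbf{A}$ is compact and $\mathbf{A}^{r-1}=\mathbf{A}\,\mathbf{A}^{r-2}$ is a compact operator composed with a bounded one, $\mathbf{A}^{r-1}$ is itself compact.

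Next I would pass to the closed subspace $M=\overline{R(\mathbf{A})}$. Because $\mathbf{A}^{r-1}$ is continuous and maps the dense subset $R(\mathbf{A})$ into itself, it leaves $M$ invariant; and by the same continuity argument the relation $\mathbf{A}^{r-1}f=f$ extends from $R(\mathbf{A})$ to all of $M$, so that $\mathbf{A}^{r-1}\big|_{M}=\mathbf{I}_{M}$. The restriction of a compact operator to a closed invariant subspace is again compact, whence $\mathbf{I}_{M}$ is compact. But the identity on a Hilbert space is compact only when the space is finite-dimensional (the closed unit ball would otherwise fail to be compact, by the Riesz lemma). Therefore $M$, and hence its subspace $R(\mathbf{A})$, is finite-dimensional.

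The only delicate point is that $R(\mathbf{A})$ need not be closed, which is precisely why I work with its closure $M$ and verify, via continuity of $\mathbf{A}^{r-1}$, both that $M$ is invariant and that $\mathbf{A}^{r-1}$ remains the identity on $M$; once this is in place, the conclusion is immediate from the non-compactness of the identity in infinite dimensions. An alternative route would use the spectral theory of compact operators directly: every nonzero spectral value $\lambda$ satisfies $\lambda^r=\lambda$, hence is one of the finitely many $(r-1)$-th roots of unity, each eigenspace is finite-dimensional, and a compact operator has no nonzero residual or continuous spectrum; this again bounds $\dim R(\mathbf{A})$, but it requires more machinery than the restriction argument above, which I would prefer for its brevity.
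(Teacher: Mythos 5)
Your proof is correct and rests on the same key observation as the paper's: $\mathbf{A}^{r-1}$ acts as the identity on $R(\mathbf{A})$, and the identity on an infinite-dimensional subspace cannot be compact. The paper merely unpacks this by taking an orthonormal sequence $\{e_n\}$ in $R(\mathbf{A})$ and showing $\{\mathbf{A}e_n\}$ can have no convergent subsequence, whereas you invoke the non-compactness of $\mathbf{I}_M$ on $M=\overline{R(\mathbf{A})}$ directly; this is the same argument in compressed form, and your extra care about closedness is sound (the paper simply asserts $R(\mathbf{A})$ is closed, which does hold since $R(\mathbf{A})=\mathrm{Ker}(\mathbf{A}^{r-1}-\mathbf{I})$).
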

\begin{proof}
We prove this lemma using the method of contradiction. Let $\mathbf{A}$ be a compact $r$-potent operator on a Hilbert space $\mathcal{L}^2(\mathcal{X})$ and let its range space $R(\mathbf{A})$ be infinite-dimensional. 
It is easy to see that $R(\mathbf{A})$ is a closed, and hence, complete subspace of $\mathcal{L}^2(\mathcal{X})$. 
 
Now, let $\{e_n\}_{n\in\mathbb{N}}$ be an orthonormal sequence in $R(\mathbf{A})$. Then, $\mathbf{A}^{r-1}e_n=e_n,\forall n$. Also, $\{e_n\}_{n\in\mathbb{N}}$ is a bounded sequence in $R(\mathbf{A})$. In what follows, we will show that $\{\mathbf{A}e_n\}_{n\in\mathbb{N}}$ does not have a convergent subsequence, contradicting the compactness of $\mathbf{A}$. 

We start by noting that since $\|e_n - e_m\| =\sqrt{2},\forall n\neq m$, the sequence $\{e_n\}_{n\in\mathbb{N}}$, which in turn is equal to $\{\mathbf{A}^{r-1}e_n\}_{n\in\mathbb{N}}$ due to $r$-potence, cannot have any convergent subsequence. Now, suppose the sequence $\{\mathbf{A}e_n\}_{n\in\mathbb{N}}$ has a convergent subsequence, say $\{\mathbf{A}e_{n_k}\}_{k\in\mathbb{N}}$. Then, since $R(\mathbf{A})$ is closed, $\{\mathbf{A}e_{n_k}\}_{n\in\mathbb{N}}$ must converge in $R(\mathbf{A})$. We can therefore write
\begin{align}
\mathbf{A}e_{n_k}&\rightarrow \mathbf{A}x\quad \mathrm{(say)}\\
\Rightarrow \mathbf{A}^{r-1}e_{n_k}&\rightarrow\mathbf{A}^{r-1}x\quad\text{(since $\mathbf{A}^{r-1}$ is continuous)}\\
\Rightarrow e_{n_k} &\rightarrow \mathbf{A}^{r-1}x\quad \text{(since $\mathbf{A}^{r-1}=\mathbf{I}$ on $R(\mathbf{A})$)},
\end{align}
which is a contradiction. It therefore follows that the sequence $\{\mathbf{A}e_n\}_{n\in\mathbb{N}}$ does not have any convergent subsequence, which is a contradiction to our assertion that $\mathbf{A}$ is compact.
\end{proof}
Since our focus in this paper is on nonnegative {\it compact} $r$-potent operators, we will assume throughout the paper that $r$-potent operators have finite-dimensional ranges. 

To summarise, Lemmas~\ref{LemmaNonNegativeElementInKernal}, \ref{LemmaRealAndImaginary} and \ref{LemmaCompactOperatorsHaveFiniteDimensionalRange} imply that for the rest of this paper, we need to focus only on operators with (1) no nonzero nonnegative functions in their kernels, (2) real basis functions and (3) finite dimensional range spaces (say of dimension $N$).


\section{Characterization of Range Spaces of $r$-Potent Operators}
\label{Sec:CharacterizationRangeSpace-r-PotentOperators}
We show in this section that it is always possible to choose the basis functions of the range space of a nonnegative compact $r$-potent operator to be all {\it nonnegative} and mutually orthogonal. The key word here is ``nonnegative" because if we simply require an orthogonal basis, we can obtain it using (say) Gram-Schmidt orthogonalization procedure. Typically, any orthogonal basis obtained using Gram-Schmidt procedure may however contain mixed functions. Obtaining a basis with all nonnegative yet orthogonal functions is completely nontrivial and is the main subject of this section. The rest of this section is organized as follows.

 We first present a result (stated as Lemma~\ref{LemmaPositiveAndNegativePartInRange} and Corollary~\ref{LemmaCanAlwaysGetAllElementsNonnegative}) to show that we can always construct a basis of $R(\mathbf{A})$ comprising of only nonnegative functions in $\mathcal{L}^2(\mathcal{X})$  from any given basis of $R(\mathbf{A})$. We then provide a set of three Lemmas (Lemma~ \ref{ConstructionOfMixedElementFromTwoLinIndElements},  \ref{LemmaInMixedVectorGenerationAllThreeElementsAreLinInd} and \ref{ConvertOverlappingElementsIntoNonOverlappingElements}) to demonstrate that if any pair of nonnegative basis functions have overlapping supports such that the measure of overlap is positive,  then we can always obtain an alternate basis where the two  elements with overlapping supports are replaced by certain alternate basis functions with nonoverlapping supports.   We finally present a stepwise algorithm (Theorem~\ref{TheoremOrthogonalityOfBasisElements}) that uses Lemma~\ref{ConvertOverlappingElementsIntoNonOverlappingElements} to show that we can systematically replace all  basis functions with overlapping supports in $R(\mathbf{A})$ by alternate nonnegative basis functions whose supports are nonoverlapping. 
 
 We start with  Lemma~\ref{LemmaPositiveAndNegativePartInRange} (generalization of \cite[Lemma~3.3]{ZhongPaper} from idempotent to $r$-potent case) and Corollary~\ref{LemmaCanAlwaysGetAllElementsNonnegative}.

\begin{lemma}
\label{LemmaPositiveAndNegativePartInRange}
If the kernel $\mathrm{Ker}(\mathbf{A})$ of a nonnegative $r$-potent operator $\mathbf{A}$ does not contain any  nonzero nonnegative function, then for every mixed function $f\in R(\mathbf{A})$, the positive and negative components (namely $f^+$ and $f^-$, respectively) of $f$ must also be elements of $R(\mathbf{A})$, that is, $f^+,f^-\in R(\mathbf{A})$. 
\end{lemma}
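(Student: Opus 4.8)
The plan is to apply the operator $\mathbf{A}^{r-1}$ separately to the positive and negative parts of $f$, exploiting two facts: that $\mathbf{A}^{r-1}$ is itself nonnegative (being a product of the nonnegative operator $\mathbf{A}$ with itself), and that $\mathbf{A}^{r-1}=\mathbf{I}$ on $R(\mathbf{A})$. First I would set $u=\mathbf{A}^{r-1}f^+$ and $v=\mathbf{A}^{r-1}f^-$. Since $f\in R(\mathbf{A})$ we have $\mathbf{A}^{r-1}f=f$, so linearity gives $u-v=\mathbf{A}^{r-1}(f^+-f^-)=f=f^+-f^-$. Both $u$ and $v$ are nonnegative (as $f^+,f^-\geq 0$ and $\mathbf{A}^{r-1}$ is nonnegative), and both lie in $R(\mathbf{A})$ because they are images under $\mathbf{A}^{r-1}=\mathbf{A}\,\mathbf{A}^{r-2}$. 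The goal thus reduces to proving $u=f^+$ and $v=f^-$.

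Next I would introduce the defect $w=u-f^+$; the identity $u-v=f^+-f^-$ shows equivalently that $w=v-f^-$. A short support analysis establishes $w\geq 0$: on $\mathrm{Supp}\,f^+$ one has $f^-=0$, so $w=v\geq 0$; on $\mathrm{Supp}\,f^-$ one has $f^+=0$, so $w=u\geq 0$; and off both supports $w=u=v\geq 0$. Hence $w$ is a genuinely nonnegative function, which is exactly the type of object the hypothesis on $\mathrm{Ker}(\mathbf{A})$ is designed to control.

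The crucial step is to show that $w\in\mathrm{Ker}(\mathbf{A})$. Since $u\in R(\mathbf{A})$ we have $\mathbf{A}^{r-1}u=u$, while $\mathbf{A}^{r-1}f^+=u$ by definition; therefore $\mathbf{A}^{r-1}w=u-u=0$. Applying $\mathbf{A}$ once more and using $r$-potence, $\mathbf{A}w=\mathbf{A}^{r}w=\mathbf{A}\bigl(\mathbf{A}^{r-1}w\bigr)=0$, so $w$ is a nonnegative element of $\mathrm{Ker}(\mathbf{A})$. The standing hypothesis that $\mathrm{Ker}(\mathbf{A})$ contains no nonzero nonnegative function then forces $w=0$, whence $f^+=u\in R(\mathbf{A})$ and $f^-=v\in R(\mathbf{A})$, as required.

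I expect the main obstacle to be identifying the correct quantity to track rather than any heavy computation: the relation $u-v=f$ by itself does not pin down $u=f^+$, since $f$ admits many representations as a difference of nonnegative functions. The decisive observation is that the particular defect $w$ arising from $\mathbf{A}^{r-1}f^+$ is simultaneously nonnegative (by the support bookkeeping) and annihilated by $\mathbf{A}$ (via $\mathbf{A}^{r-1}w=0$ upgraded through $\mathbf{A}^{r}=\mathbf{A}$); it is precisely this pairing of nonnegativity with kernel membership that makes the hypothesis applicable and closes the argument.
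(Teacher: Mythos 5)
Your proposal is correct and follows essentially the same route as the paper: your defect $w=\mathbf{A}^{r-1}f^+-f^+$ is exactly the paper's auxiliary function $h$, shown to be nonnegative by the same support bookkeeping and to lie in $\mathrm{Ker}(\mathbf{A})$ via the same $r$-potence computation, forcing $w=0$. The only cosmetic difference is that you verify $w\geq 0$ on three regions ($\mathrm{Supp}\,f^+$, $\mathrm{Supp}\,f^-$, and their joint complement) while the paper uses two, which is if anything slightly cleaner.
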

\begin{proof}
Since $f\in R(\mathbf{A})$, we have
\begin{align}
\mathbf{A}^{r-1}f&=f\\
\Rightarrow \mathbf{A}^{r-1}(f^+-f^-) & = f^+-f^-\\
\Rightarrow \mathbf{A}^{r-1}f^+-f^+&=\mathbf{A}^{r-1}f^- - f^-=h\quad\text{(say)}\\
\label{Ar-1f+Equalf++h}
\Rightarrow \mathbf{A}^{r-1}f^+&=f^+ +h\quad\text{and}\\
\label{Ar-1f-Equalf-+h}
\mathbf{A}^{r-1}f^-&=f^- +h.
\end{align}
Therefore, $f^++h$ and $f^-+h$ are both elements of $R(\mathbf{A})$. However,
\begin{align}
\label{Ahequalszero}
\mathbf{A}h&=\mathbf{A}(\mathbf{A}^{r-1}f^+-f^+)=\mathbf{A}^rf^+-\mathbf{A}f^+=\mathbf{A}f^+-\mathbf{A}f^+=0,
\end{align}
that is, $h$ is an element in $\text{Ker }(\mathbf{A})$. Since there is no nonzero nonnegative element in  $\text{Ker }(\mathbf{A})$ (recall Lemma~\ref{LemmaNonNegativeElementInKernal}), $h$ must be either zero or a mixed function. We next show that $h$ cannot be a mixed function. Specifically, we note that 
\begin{align}
f&=\mathbf{A}^{r-1}f\\
\label{Ar-1f+MinusAr-1f-Equalsf+Minusf-}
\Rightarrow f^+-f^-&=\mathbf{A}^{r-1}f^+ - \mathbf{A}^{r-1}f^-.
\end{align}
Since $ \mathbf{A}^{r-1}f^-\geq 0$ and $f^-=0$ on $\text{Supp }f^+$, Eqn.~(\ref{Ar-1f+MinusAr-1f-Equalsf+Minusf-}) implies
\begin{align}
\label{Ar-1f+GreaterThanf+OnSuppf+}
\mathbf{A}^{r-1}f^+&\geq f^+\quad\text{on Supp $f^+$}.
\end{align} 
Moreover, since $f^+=0$ on $(\text{Supp }f^+)^c$ while $\mathbf{A}^{r-1}f^+\geq 0$, we have
 \begin{align}
\label{Ar-1f+GreaterThanf+OnSuppf+Complement}
\mathbf{A}^{r-1}f^+&\geq f^+\quad\text{on $(\text{Supp }f^+)^c$}.
\end{align} 
Combining Eqn.~(\ref{Ar-1f+GreaterThanf+OnSuppf+}) and (\ref{Ar-1f+GreaterThanf+OnSuppf+Complement}) and noticing that 
\begin{align}
\text{Supp }f^+\cup(\text{Supp }f^+)^c&=\mathcal{X},
\end{align} 
we get
\begin{align}
\mathbf{A}^{r-1}f^+&\geq f^+\\
\Rightarrow h=\mathbf{A}^{r-1}f^+&- f^+\geq 0.
\end{align}
Therefore, $h$ cannot be a mixed function, and hence, $h$ must be zero. Consequently, both $f^+=\mathbf{A}^{r-1}f^+$ and $f^-=\mathbf{A}^{r-1}f^-$ (from Eqns~(\ref{Ar-1f+Equalf++h}) and (\ref{Ar-1f-Equalf-+h}), respectively) are elements in $R(\mathbf{A})$. 
\end{proof}

\begin{corollary}
\label{LemmaCanAlwaysGetAllElementsNonnegative}
Let $\{\mathbf{e}_1,\mathbf{e}_2,\ldots,\mathbf{e}_N\}$ be a set of basis functions in the $N$-dimensional range space $\mathcal{R}(\mathbf{A})$ of a nonnegative compact $r$-potent operator $\mathbf{A}$. Then, there must exist an alternate set of basis functions $\{\mathbf{e}_1',\mathbf{e}_2',\ldots,\mathbf{e}_N'\}$ such that $\mathbf{e}_j',\forall j$, are all nonnegative. 
\end{corollary}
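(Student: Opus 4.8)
The plan is to use Lemma~\ref{LemmaPositiveAndNegativePartInRange} to convert each given basis function into one or two nonnegative members of $R(\mathbf{A})$, assemble these into a nonnegative spanning set, and then extract a basis from it. First I would classify each real basis function $\mathbf{e}_j$ into one of three mutually exclusive types: nonnegative, nonpositive, or mixed. If $\mathbf{e}_j\geq 0$, I leave it unchanged. If $\mathbf{e}_j\leq 0$, I replace it by $-\mathbf{e}_j$, which is nonnegative and still lies in $R(\mathbf{A})$ because $R(\mathbf{A})$ is a linear subspace. If $\mathbf{e}_j$ is mixed, then Lemma~\ref{LemmaPositiveAndNegativePartInRange} guarantees that both $\mathbf{e}_j^+$ and $\mathbf{e}_j^-$ belong to $R(\mathbf{A})$, so I replace $\mathbf{e}_j$ by the pair $\{\mathbf{e}_j^+,\mathbf{e}_j^-\}$, both of which are nonnegative.

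Let $S$ denote the resulting collection of nonnegative functions. The crucial observation is that the span is preserved at every step: in the nonpositive case $\mathbf{e}_j=-(-\mathbf{e}_j)$, and in the mixed case $\mathbf{e}_j=\mathbf{e}_j^+-\mathbf{e}_j^-$, so each original $\mathbf{e}_j$ lies in the linear span of the functions that replace it. Hence $\mathrm{span}(S)=\mathrm{span}\{\mathbf{e}_1,\ldots,\mathbf{e}_N\}=R(\mathbf{A})$, and every element of $S$ is a nonnegative function in $R(\mathbf{A})$.

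Finally I would extract a basis. Since $S$ spans the $N$-dimensional space $R(\mathbf{A})$, any maximal linearly independent subset of $S$ contains exactly $N$ functions and forms a basis $\{\mathbf{e}_1',\ldots,\mathbf{e}_N'\}$ of $R(\mathbf{A})$. Being a subset of $S$, each $\mathbf{e}_j'$ is nonnegative, which is precisely the claim.

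The step I expect to demand the most care is conceptual rather than technical: essentially all the analytic content has already been absorbed into Lemma~\ref{LemmaPositiveAndNegativePartInRange}, and once we know that the positive and negative parts of a mixed range function remain in the range, the remainder is elementary linear algebra (span preservation under the replacements, followed by extraction of a basis from a spanning set). The one bookkeeping subtlety worth stating explicitly is that splitting each mixed function enlarges the working set beyond $N$ functions, so the reduction back to exactly $N$ elements occurs only in the final extraction step, which is legitimate precisely because $\dim R(\mathbf{A})=N$ is fixed.
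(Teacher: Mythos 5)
Your proposal is correct and follows essentially the same route as the paper: replace each mixed basis function by its positive and negative parts (justified by Lemma~\ref{LemmaPositiveAndNegativePartInRange}), note that the span is preserved, and extract a maximal linearly independent subset of the resulting nonnegative spanning set. The only difference is that you explicitly handle the strictly nonpositive case by negation, a small point the paper's proof leaves implicit.
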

\begin{proof}
The proof follows in a straightforward manner by noticing that all the mixed functions (with both positive and negative components) among $\{\mathbf{e}_1,\mathbf{e}_2,\ldots,\mathbf{e}_N\}$ can be replaced (due to Lemma~\ref{LemmaPositiveAndNegativePartInRange}) by their respective positive and negative components. By dropping any linearly dependent functions in the set so generated, we obtain the required basis $\{\mathbf{e}_1',\mathbf{e}_2',\ldots,\mathbf{e}_N'\}$. 
\end{proof}
Corollary~\ref{LemmaCanAlwaysGetAllElementsNonnegative} establishes that given any basis of an $r$-potent operator, we can always find an alternate basis where all basis functions are nonnegative (although they may still have overlapping supports). We will show in Theorem~\ref{TheoremOrthogonalityOfBasisElements} (stated later in this Section) that given a basis of an $r$-potent operator with all nonnegative elements, it is further possible to construct an alternate basis where all basis functions are {\it both nonnegative and mutually orthogonal} (that is, with nonoverlapping supports). For proof of Theorem~\ref{TheoremOrthogonalityOfBasisElements}, we require an important Lemma (stated later as Lemma~\ref{ConvertOverlappingElementsIntoNonOverlappingElements}), which shows that if any pair of basis functions has overlapping supports, then it can be replaced in the basis by alternate basis functions that have nonoverlapping supports. However, before proceeding to the proof of Lemma~\ref{ConvertOverlappingElementsIntoNonOverlappingElements}, we shall need the following two Lemmas as well.

\begin{lemma}\cite{AlkaMarwahaThesis}
\label{ConstructionOfMixedElementFromTwoLinIndElements}
Given two linearly independent functions $f,g\in\mathcal{L}^2(\mathcal{X})$, we can always construct a mixed function $u=u^+-u^-$  such that $\mathrm{Supp}\ u^+$ and $\mathrm{Supp}\ u^-$ are nonoverlapping up to a set of measure zero. Morevover, $u^+,u^-$ so generated would be linearly independent.
\end{lemma}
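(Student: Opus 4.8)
The plan is to observe first that two of the three requirements in the statement are essentially automatic, so that the entire burden of the lemma reduces to producing a single genuinely \emph{mixed} linear combination of $f$ and $g$. Indeed, for any real function $u$, the positive and negative parts $u^+=\max\{u,0\}$ and $u^-=\max\{-u,0\}$ have disjoint supports by construction (wherever $u>0$ we have $u^-=0$, and wherever $u<0$ we have $u^+=0$), so $\mathrm{Supp}\,u^+$ and $\mathrm{Supp}\,u^-$ are nonoverlapping for free. Moreover, if $u$ is mixed then $u^+\ge 0$ and $u^-\ge 0$ are both nonzero with nonoverlapping supports, hence orthogonal by the Proposition quoted above, and two nonzero orthogonal functions are linearly independent by the subsequent Proposition. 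Thus it suffices to exhibit one mixed $u=\alpha f+\beta g$.

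First I would dispose of the trivial reductions. If $f$ or $g$ is itself mixed, take $u$ to be that function. Otherwise each of $f,g$ has constant sign almost everywhere, and replacing $f$ by $-f$ and/or $g$ by $-g$ (which alters neither linear independence nor the family of mixed combinations) I may assume $f,g\ge 0$ and both nonzero. Next I would compare supports. Writing $A=\mathrm{Supp}\,f\setminus\mathrm{Supp}\,g$ and $B=\mathrm{Supp}\,g\setminus\mathrm{Supp}\,f$, if both $\mu(A)>0$ and $\mu(B)>0$ then $u=f-g$ is positive on $A$ (where $f>0,\ g=0$) and negative on $B$ (where $f=0,\ g>0$), hence mixed, and we are done.

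The remaining, nested, case is where (say) $\mu(B)=0$, so $\mathrm{Supp}\,g\subseteq\mathrm{Supp}\,f$ up to a null set. Here I would examine the ratio $f/g$ on $\mathrm{Supp}\,g$. If it is not essentially constant, choose a level $t$ for which $\mu\{x\in\mathrm{Supp}\,g:f(x)/g(x)>t\}>0$ and $\mu\{x\in\mathrm{Supp}\,g:f(x)/g(x)<t\}>0$; then $u=f-tg$ is positive on the first set and negative on the second, hence mixed. If instead $f/g\equiv c$ almost everywhere on $\mathrm{Supp}\,g$, then linear independence forces $\mu(\mathrm{Supp}\,f\setminus\mathrm{Supp}\,g)>0$ (otherwise $f=cg$ almost everywhere), and I would take $u=f+\beta g$ with $\beta<-c$: on $\mathrm{Supp}\,f\setminus\mathrm{Supp}\,g$ we have $g=0$ and $f>0$ so $u>0$, while on $\mathrm{Supp}\,g$ we have $u=(c+\beta)g<0$, so $u$ is mixed.

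The hard part will be precisely this nested constant-ratio sub-case, since there the differences $f-tg$ never change sign on the common support, and one is forced to exploit the extra piece of $\mathrm{Supp}\,f$ on which $g$ vanishes; it is exactly linear independence that guarantees this piece has positive measure. As a conceptual cross-check one may also argue nonconstructively: the set $C^{+}=\{(\alpha,\beta):\alpha f+\beta g\ge 0\ \text{a.e.}\}$ is a closed convex cone, and if no combination were mixed then $C^{+}\cup(-C^{+})=\mathbb{R}^2$ with $C^{+}\cap(-C^{+})=\{(0,0)\}$ by linear independence, forcing $C^{+}$ to be a closed half-plane whose boundary line would produce a nonzero combination equal to zero almost everywhere, a contradiction. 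I would nonetheless present the explicit construction above, as it is constructive and in keeping with the algorithmic spirit of the paper.
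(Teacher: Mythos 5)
Your proof is correct, and it reaches the same end as the paper's --- exhibiting a scalar so that the corresponding combination of $f$ and $g$ changes sign --- but by a genuinely different route. The paper works with the sets $\mathcal{K}_1=\{k: f-kg>0\}$ and $\mathcal{K}_2=\{k: f-kg<0\}$, argues (implicitly using $f,g\geq 0$, e.g.\ in the claim that $0\in\mathcal{K}_1$, and in the claim that $\mathcal{K}_2\neq\phi$) that $k_1=\sup\mathcal{K}_1<k_2=\inf\mathcal{K}_2$, and then takes any $p$ with $k_1<p<k_2$: the function $f-pg$ is neither positive nor negative, and is nonzero by linear independence, hence mixed. Your case analysis on supports and on the essential range of the ratio $f/g$ makes explicit what that sup/inf argument encodes implicitly (your threshold $t$ plays the role of the paper's $k_1,k_2$, and your constant-ratio case is exactly the boundary situation the paper excludes via $k_1\neq k_2$). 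What your version buys is generality and rigor: it covers arbitrary real linearly independent $f,g$ as the lemma actually states (the paper's proof silently needs nonnegativity), and it replaces the paper's unproved assertions (``$f>kg$ for all $k$, which is of course not possible'') with verifiable steps. What the paper's version buys is brevity. Your convex-cone cross-check is sound but inessential; the explicit construction is the right thing to present.
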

\begin{proof}
Consider the following two subsets of real numbers
\begin{align}
\mathcal{K}_1&=\{k:f-kg>0\}\nonumber\\
\mathcal{K}_2&=\{k:f-kg<0\}\nonumber,
\end{align}
and note that $\mathcal{K}_1\neq\phi$ because $0\in\mathcal{K}_1$ and $\mathcal{K}_2\neq \phi$ because $K_2=\phi\Rightarrow f>kg,\forall k\in\mathbb{R}$, which, of course, is not possible. 
Now, let $k_1=\mathrm{Supremum}\ \mathcal{K}_1$ and $k_2=\mathrm{Infimum}\ \mathcal{K}_2$. If $k_1$ and $k_2$ are finite, then $k_1\neq k_2$ as then $f$ and $g$ would be linearly dependent, which is not true. Therefore, we must have $k_1<k_2$ (Note that since $\mathcal{K}_1\neq\phi$, $k_1$ cannot be infinity when $k_2$ is infinite). If we chose any real scalar $p$ such that $k_1<p<k_2$, then $p\notin \mathcal{K}_1$ because $k_1<p$ and $p\notin \mathcal{K}_2$ because $p<k_2$. Therefore,  $f-pg\nsucc 0$ and $f-pg\nprec 0$, and hence, $f-pg$ is a mixed function. Let $f-pg=u^+-u^-$, where $u^+$ is the positive component and $u^-$ is the negative component of this mixed function. Then, $\mu(\mathrm{Supp}\ u^+\cap \mathrm{Supp}\ u^-)=0$, and consequently, $\{u^+,u^-\}$ are orthogonal and hence linearly independent.
\end{proof}

\begin{lemma}
\label{LemmaInMixedVectorGenerationAllThreeElementsAreLinInd}
Consider a mixed function $u=e_1-pe_2=u^+-u^-$ generated from linearly independent functions $e_1,e_2\in R(\mathbf{A})$. Then, each of the sets $\{e_1,u^+,u^-\}$ and $\{e_2,u^+,u^-\}$  is linearly independent.
\end{lemma}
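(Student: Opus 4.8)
The plan is to exploit the single linear relation $u^{+}-u^{-}=e_1-pe_2$ that ties the four functions together, together with the fact (from Lemma~\ref{ConstructionOfMixedElementFromTwoLinIndElements}) that $u^{+}$ and $u^{-}$ have nonoverlapping supports and are therefore orthogonal, hence linearly independent. I also record that the scalar $p$ delivered by the construction satisfies $p>k_1\ge 0$, so $p\neq 0$ and in fact $p>0$; this nonvanishing of $p$ is used throughout.

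My first move is a purely algebraic reduction showing that \emph{both} independence assertions collapse to the single statement that $\{e_1,e_2,u^{+}\}$ is linearly independent (equivalently $u^{+}\notin\mathrm{span}\{e_1,e_2\}$). Suppose $a e_1+b u^{+}+c u^{-}=0$. Eliminating $u^{-}$ by $u^{-}=u^{+}-e_1+pe_2$ yields
\begin{align*}
(a-c)\,e_1+cp\,e_2+(b+c)\,u^{+}=0 .
\end{align*}
If $\{e_1,e_2,u^{+}\}$ is independent, all three coefficients vanish, and since $p\neq 0$ this gives $c=0$ and then $a=b=0$; hence $\{e_1,u^{+},u^{-}\}$ is independent. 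Running the same elimination on a hypothetical relation $a e_2+b u^{+}+c u^{-}=0$ produces
\begin{align*}
-c\,e_1+(a+cp)\,e_2+(b+c)\,u^{+}=0 ,
\end{align*}
and the independence of $\{e_1,e_2,u^{+}\}$ again forces $a=b=c=0$. Thus both sets in the statement are linearly independent as soon as $u^{+}\notin\mathrm{span}\{e_1,e_2\}$, and it is this last claim that carries all the content.

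I would then attack $u^{+}\notin\mathrm{span}\{e_1,e_2\}$ by supports. Write $S^{+}=\mathrm{Supp}\,u^{+}$, $S^{-}=\mathrm{Supp}\,u^{-}$ and $M=\mathcal{X}\setminus(S^{+}\cup S^{-})$, so that $e_1-pe_2>0$ on $S^{+}$, $e_1-pe_2<0$ on $S^{-}$, and $e_1=pe_2$ on $M$. Assume for contradiction that $u^{+}=\alpha e_1+\beta e_2$. On $M$ the left side vanishes while $e_1=pe_2>0$ (using $e_1>0$ and $p>0$ from the construction), which forces $\alpha p+\beta=0$; substituting $\beta=-\alpha p$ and restricting to $S^{-}$, where $u^{+}=0$, gives $\alpha(e_1-pe_2)=0$ against $e_1-pe_2<0$, so $\alpha=\beta=0$ and $u^{+}=0$, contradicting that $u$ is mixed. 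This disposes of the claim whenever the cancellation set $M$ has positive measure.

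The step I expect to be the main obstacle is the degenerate case $\mu(M)=0$, i.e. $e_1-pe_2\neq 0$ almost everywhere. Here the support bookkeeping only shows that $e_1$ and $e_2$ are proportional \emph{separately} on $S^{+}$ and on $S^{-}$, and a ``piecewise proportional'' configuration is precisely what must be excluded; such configurations can genuinely occur only when $\mathrm{span}\{e_1,e_2\}$ is forced to exhaust the ambient functions (as on a two-point space). I would rule this out using the positivity $e_1>0$ from the construction of Lemma~\ref{ConstructionOfMixedElementFromTwoLinIndElements} together with the genuine linear independence of $e_1,e_2$ in $\mathcal{L}^2(\mathcal{X})$: a single proportionality constant cannot hold across both $S^{+}$ and $S^{-}$ without collapsing $e_1,e_2$ to multiples of one another, while two distinct constants are incompatible with $u^{+}$ being recoverable as one fixed linear combination of $e_1$ and $e_2$ once the thresholding has discarded part of their common support. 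Making this final step airtight, rather than the algebra of the reduction, is where the argument needs the most care.
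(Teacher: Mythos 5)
Your algebraic reduction is correct and is tidier than the paper's own argument: eliminating $u^-$ via $u^-=u^+-e_1+pe_2$ (with $p>k_1\ge 0$, so $p>0$) does show that both assertions follow from $u^+\notin\mathrm{span}\{e_1,e_2\}$, and your treatment of the case $\mu(M)>0$ is sound where $e_1>0$ on $M$. For comparison, the paper proceeds by four cases according to which of the overlaps $\mathrm{Supp}\,e_1\cap\mathrm{Supp}\,u^{+}$ and $\mathrm{Supp}\,e_1\cap\mathrm{Supp}\,u^{-}$ have positive measure, and in each case only verifies that the three \emph{pairs} $\{e_1,u^+\}$, $\{e_1,u^-\}$, $\{u^+,u^-\}$ are linearly independent before asserting independence of the triple; it never isolates, let alone resolves, the configuration you flag.

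However, the step you leave open is a genuine gap, and it cannot be closed: the statement is false in precisely the degenerate case you identify. Take $\mathcal{X}=[0,2]$ with Lebesgue measure, $e_1=\chi_{[0,1]}+\chi_{[1,2]}$ and $e_2=\chi_{[0,1]}+2\chi_{[1,2]}$; both lie in the range of the nonnegative compact idempotent $\mathbf{A}f=\bigl(\int_0^1 f\,\mathrm{d}\mu\bigr)\chi_{[0,1]}+\bigl(\int_1^2 f\,\mathrm{d}\mu\bigr)\chi_{[1,2]}$ (an idempotent is $r$-potent for every $r$, and this kernel contains no nonzero nonnegative function). Here $\mathcal{K}_1=(-\infty,1/2)$ and $\mathcal{K}_2=(1,\infty)$, so Lemma~\ref{ConstructionOfMixedElementFromTwoLinIndElements} may produce $p=3/4$ and $u=e_1-\tfrac34 e_2=\tfrac14\chi_{[0,1]}-\tfrac12\chi_{[1,2]}$, whence $u^+=\tfrac14\chi_{[0,1]}$, $u^-=\tfrac12\chi_{[1,2]}$ and $\mu(M)=0$. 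Then $e_1=4u^++2u^-$ and $e_2=4u^++4u^-$, so both $\{e_1,u^+,u^-\}$ and $\{e_2,u^+,u^-\}$ are linearly \emph{dependent}, even though every pair within each triple is independent (which is why the paper's pairwise argument also fails here). Your instinct that the obstruction lives exactly where $\mathrm{span}\{e_1,e_2\}$ exhausts the available functions is right, but the hoped-for contradiction does not exist: whenever $\mathrm{span}\{e_1,e_2\}=\mathrm{span}\{u^+,u^-\}$ the conclusion fails, and no amount of care with positivity will change that. A correct statement must either exclude this case by hypothesis or weaken the conclusion to the span identity $\mathrm{span}\{e_1,u^+,u^-\}=\mathrm{span}\{e_1,e_2,u^+\}\supseteq\mathrm{span}\{e_1,e_2\}$, which is all the subsequent basis-replacement argument actually requires.
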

\begin{proof}
We can divide the proof into the following cases:
\begin{description}
\item[Case 1]  The following three conditions hold:
\begin{align*}
\mu(\text{Supp}\ e_1\cap\text{Supp}\ u^+)&=0,\\
\mu(\text{Supp}\ e_1\cap\text{Supp}\ u^-)&=0,\\
\mu(\text{Supp}\ u^+\cap\text{Supp}\ u^-)&=0.
\end{align*}
In this case, $\{e_1,u^+\}$, $\{e_1,u^-\}$, $\{u^+,u^-\}$ are linearly independent. Therefore, $\{e_1,u^+,u^-\}$  is also linearly independent. 
\item[Case 2]  The following three conditions hold:
\begin{align*}
\mu(\text{Supp}\ e_1\cap\text{Supp}\ u^+)&=0,\\
\mu(\text{Supp}\ e_1\cap\text{Supp}\ u^-)&>0,\\
\mu(\text{Supp}\ u^+\cap\text{Supp}\ u^-)&=0.
\end{align*}
In this case, $\{e_1,u^+\}$ is linearly independent. Suppose $e_1=\alpha u^-$ for some $\alpha$, then $\alpha >0$ and $\alpha u^- - p e_2=u^+ - u^-$. Therefore, 
\begin{align*}
&(\alpha+1)u^- - u^+ = pe_2,\\
\Rightarrow &e_2= \frac{\alpha+1}{p}u^- - \frac{1}{p} u^+,
\end{align*}
which is a contradiction as $e_2\geq 0$. Therefore, $\{e_1,u^-\}$ is linearly independent, and consequently, $\{e_1,u^+,u^-\}$ is linearly independent. 
\item[Case 3] The following three conditions hold:
\begin{align*}
\mu(\text{Supp}\ e_1\cap\text{Supp}\ u^+)&>0,\\
\mu(\text{Supp}\ e_1\cap\text{Supp}\ u^-)&=0,\\
\mu(\text{Supp}\ u^+\cap\text{Supp}\ u^-)&=0.
\end{align*}
This case is similar to Case 2. 
\item[Case 4] The following three conditions hold:
\begin{align*}
\mu(\text{Supp}\ e_1\cap\text{Supp}\ u^+)&>0,\\
\mu(\text{Supp}\ e_1\cap\text{Supp}\ u^-)&>0,\\
\mu(\text{Supp}\ u^+\cap\text{Supp}\ u^-)&=0.
\end{align*}
Since $\mu(\text{Supp}\ e_1\cap\text{Supp}\ u^+)>0$, we have $e_1\neq 0$  but $u^-=0$ on $\text{Supp }e_1\cap \text{Supp }u^+$. In addition, since $\mu(\text{Supp}\ e_1\cap\text{Supp}\ u^-)>0$, we have $e_1\neq 0$ but $u^+=0$ on $\text{Supp }u^-\cap \text{Supp } e_1$.  Therefore, $\{e_1,u^-\}$ and $\{e_1,u^+\}$ are linearly independent.  Consequently, $\{e_1,u^+,u^-\}$ is linearly independent. 
\end{description}
Replacing $e_1$ by $e_2$ in the above statements, we get linear independence of $\{e_2,u^+,u^-\}$ as well. 
\end{proof}
We finally proceed to Lemma~\ref{ConvertOverlappingElementsIntoNonOverlappingElements}, which eventually forms the basis of our proof for Theorem~\ref{TheoremOrthogonalityOfBasisElements}. 

\begin{lemma}
\label{ConvertOverlappingElementsIntoNonOverlappingElements}
Let $\mathcal{E}=\{e_1,e_2,\ldots,e_N\}$ be a set of nonnegative basis functions of $R(\mathbf{A})$ such that 
\begin{enumerate}[label=(\alph*)]
\label{MainLemma}
\item \label{MainLemmaParta} There exist two distinct functions $e_r,e_s\in\mathcal{E}$ with identical supports, that is, $\text{Supp }e_r=\text{Supp }e_s$. Then, it is possible to get an alternate basis for $R(\mathbf{A})$ by replacing $e_r,e_s$ in $\mathcal{E}$ by two new functions $e_r',e_s'\in R(\mathbf{A})$ such that $\text{Supp } e_r'$ and $\text{Supp }e_s'$ have at most partial overlap.
\item \label{MainLemmaPartb} There exist two distinct functions $e_r,e_s \in\mathcal{E}$ with $\text{Supp }e_r\subseteq\text{Supp }e_s$. Then, we can construct at least two new basis functions $e_r',e_s'\in\mathcal{E}$ with orthogonal supports.
\item \label{MainLemmaPartbdash} There exist two distinct functions $e_r,e_s\in\mathcal{E}$ with their respective supports satisfying the following three conditions: $\mu(\text{Supp}\ e_r\cap\text{Supp}\ e_s)>0$, $\mu(\text{Supp}\ e_r\cap\text{Supp}\ e_s^c)>0$ and $\mu(\text{Supp}\ e_r^c\cap\text{Supp}\ e_s)>0$. Further, let $e_r$ and $e_s$ be  linearly dependent on $\text{Supp}\ e_r\cap\text{Supp}\ e_s$. Then, we can construct three orthogonal nonnegative basis functions in $R(\mathbf{A})$ from $e_r$ and $e_s$. 
\item \label{MainLemmaPartc} There exist two distinct functions $e_r,e_s\in\mathcal{E}$ with their respective supports satisfying the following three conditions: $\mu(\text{Supp}\ e_r\cap\text{Supp}\ e_s)>0$, $\mu(\text{Supp}\ e_r\cap\text{Supp}\ e_s^c)>0$ and $\mu(\text{Supp}\ e_r^c\cap\text{Supp}\ e_s)>0$. Further, let $e_r$ and $e_s$ be linearly independent on $\text{Supp}\ e_r\cap\text{Supp}\ e_s$. Then, we can construct four orthogonal nonnegative basis functions in $R(\mathbf{A})$ from $e_r$ and $e_s$. 
\end{enumerate} 
\end{lemma}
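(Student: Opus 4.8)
The plan is to handle all four parts through one construction built on the three disjoint regions $A=\operatorname{Supp}e_r\cap(\operatorname{Supp}e_s)^c$, $B=(\operatorname{Supp}e_r)^c\cap\operatorname{Supp}e_s$ and $O=\operatorname{Supp}e_r\cap\operatorname{Supp}e_s$, manufacturing nonnegative members of $R(\mathbf{A})$ supported on each piece (and, where needed, on a splitting of $O$). The engine throughout is the combination of Lemmas~\ref{ConstructionOfMixedElementFromTwoLinIndElements} and~\ref{LemmaPositiveAndNegativePartInRange}: since distinct basis functions are linearly independent, for a suitable scalar $p$ the function $u=e_r-pe_s$ is mixed, its parts $u^+,u^-$ have nonoverlapping supports, and both lie in $R(\mathbf{A})$; Lemma~\ref{LemmaInMixedVectorGenerationAllThreeElementsAreLinInd} supplies the independence needed to preserve a basis after swapping functions in and out.

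The crucial device, and the step I expect to be the main obstacle, is isolating the ``wing'' functions $e_r\chi_A$ and $e_s\chi_B$ inside $R(\mathbf{A})$. I would obtain these by a limiting argument that rests on the finite-dimensionality, hence closedness, of $R(\mathbf{A})$ furnished by Lemma~\ref{LemmaCompactOperatorsHaveFiniteDimensionalRange}. For $p>0$ put $u_p=e_r-pe_s$, so $u_p^+\in R(\mathbf{A})$ by Lemma~\ref{LemmaPositiveAndNegativePartInRange}. On $A$ one has $u_p^+=e_r$, while the only other contribution comes from $\{e_r>pe_s\}\cap O$; since $e_s>0$ a.e.\ on $O$, this set shrinks to a null set as $p\to\infty$, so by dominated convergence $u_p^+\to e_r\chi_A$ in $\mathcal{L}^2(\mathcal{X})$. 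As every $u_p^+$ lies in the closed set $R(\mathbf{A})$, the limit $e_r\chi_A$ does too; the symmetric argument (using $e_r>0$ a.e.\ on $O$) yields $e_s\chi_B\in R(\mathbf{A})$. It then follows at once that $e_r\chi_O=e_r-e_r\chi_A$ and $e_s\chi_O=e_s-e_s\chi_B$ also lie in $R(\mathbf{A})$.

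With the three regions populated, the four parts reduce to bookkeeping of the contribution of $O$, where $e_r\chi_O$ and $e_s\chi_O$ both have full support $O$. Feeding this pair to the engine produces a single function $e_r\chi_O$ when they are proportional on $O$, and two nonoverlapping nonnegative pieces $w^+,w^-\in R(\mathbf{A})$ when they are independent on $O$. For part~\ref{MainLemmaPartbdash} this gives the three orthogonal functions $e_r\chi_A,\ e_r\chi_O,\ e_s\chi_B$ on the disjoint sets $A,O,B$; for part~\ref{MainLemmaPartc} it gives the four orthogonal functions $e_r\chi_A,\ w^+,\ w^-,\ e_s\chi_B$; and part~\ref{MainLemmaPartb} is the degenerate case $A=\emptyset$, where the same list collapses to (at least) the two orthogonal functions on $O$ and $B$. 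In every case the constructed functions are nonnegative with pairwise nonoverlapping supports, hence orthogonal and linearly independent, and a standard exchange argument enlarges them to a basis of $R(\mathbf{A})$; because these functions are already independent among themselves, every nontrivial dependency in the enlarged family must involve some untouched $e_i$ with $i\neq r,s$, which is therefore the function I discard.

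Part~\ref{MainLemmaParta} is handled separately because it asks for a genuine two-for-two replacement preserving the span. Here $A=B=\emptyset$ and $O=\operatorname{Supp}e_r=\operatorname{Supp}e_s$, so full orthogonalization would enlarge the span; instead I keep $e_s$ and replace $e_r$ by whichever of $u^+,u^-$ preserves the basis. Expanding $u^{\pm}$ in $\mathcal{E}$ and using $u^+-u^-=e_r-pe_s$ shows the two candidates share their coordinates along $\mathcal{E}\setminus\{e_r,e_s\}$, so a one-line check on the coefficient of $e_r$ shows that at least one of the swaps again gives a basis (this is precisely where Lemma~\ref{LemmaInMixedVectorGenerationAllThreeElementsAreLinInd} enters). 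Since $u$ is mixed, $\operatorname{Supp}u^{\pm}\subsetneq\operatorname{Supp}e_s$, so the resulting pair has at most partial overlap, as required.
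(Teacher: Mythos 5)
Your proposal is correct, and it arrives at the same target objects as the paper — the restrictions of $e_r$ and $e_s$ to the three disjoint regions $A=\text{Supp}\ e_r\cap(\text{Supp}\ e_s)^c$, $O=\text{Supp}\ e_r\cap\text{Supp}\ e_s$ and $B=(\text{Supp}\ e_r)^c\cap\text{Supp}\ e_s$, followed by one application of Lemma~\ref{ConstructionOfMixedElementFromTwoLinIndElements} on $O$ when the restrictions there are independent — but the route to showing those restrictions lie in $R(\mathbf{A})$ is genuinely different. The paper stays purely algebraic: in part~\ref{MainLemmaPartbdash} the proportionality on $O$ makes the overlap cancel exactly in $e_r-\frac{1}{\alpha}e_s$, and in part~\ref{MainLemmaPartc} a single scalar $\alpha$ with $h_r>\alpha h_s$ on $O$ makes $e_r-\alpha e_s$ mixed with negative part exactly $\alpha f_s$, after which Lemma~\ref{LemmaPositiveAndNegativePartInRange} is invoked once per piece. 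You instead let the scalar run to infinity, so that $(e_r-pe_s)^+\to e_r\chi_A$ in $\mathcal{L}^2$ by dominated convergence, and you use closedness of the finite-dimensional $R(\mathbf{A})$ (Lemma~\ref{LemmaCompactOperatorsHaveFiniteDimensionalRange}) to place the limit in the range. What your version buys is robustness: the paper's single-scalar choices implicitly require the ratio of the two functions to be essentially bounded away from zero on the relevant set (otherwise no $\alpha$ with $h_r>\alpha h_s$ a.e.\ exists, and likewise no $p$ with $g-pe_r<0$ in Case~2 of part~\ref{MainLemmaPartb}), whereas your limit needs no such hypothesis, at the price of importing a topological fact into an otherwise algebraic argument. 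Your coefficient computation $a_r-b_r=1$ in part~\ref{MainLemmaParta} is a cleaner justification of the swap than the paper's appeal to Lemma~\ref{LemmaInMixedVectorGenerationAllThreeElementsAreLinInd}. The one loose end is minor: in part~\ref{MainLemmaPartc} the span of $w^{+},w^{-}$ need not contain $e_r\chi_O$ individually, so your exchange step may retain some original $e_i$ in the completed basis; this is harmless for the lemma as stated and is exactly the level of bookkeeping the paper also defers to the algorithm of Theorem~\ref{TheoremOrthogonalityOfBasisElements}.
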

\begin{proof} 
We prove the four parts of the Lemma separately:
\begin{enumerate}[label=(\alph*)]
\item
Since the set $\{e_r,e_s\}$ is linearly independent, we can construct a mixed function $u$ from $e_r$ and $e_s$ using Lemma~\ref{ConstructionOfMixedElementFromTwoLinIndElements}. Specifically, let $e_r-pe_s=u^+-u^-$, where, as previously, $u^+$ and $u^-$ are the positive and negative components of $u$. Then, by Lemma~\ref{LemmaPositiveAndNegativePartInRange}, we get $\{u^+,u^-\}\subseteq R(\mathbf{A})$. Moreover, if both $u^+$ and $u^-$ do not lie in the linear span of the remaining basis functions $\mathcal{E}-\{e_r,e_s\}$, then we can replace $\{e_r,e_s\}$ in $\mathcal{E}$ by $\{u^+,u^-\}$ and the resulting set would be the required new basis. However, if either of $u^+$ or $u^-$ lies in the linear span of the remaining basis functions $\mathcal{E}-\{e_r,e_s\}$, then we can replace $\{e_r,e_s\}$ in $\mathcal{E}$ by the linearly independent  set $\{e_r,u^+,u^-\}$ (due to Lemma~\ref{LemmaInMixedVectorGenerationAllThreeElementsAreLinInd}), so that the resulting set $\mathcal{E}'=\mathcal{E}-\{e_s\}+\{u^+,u^-\}$ spans the entire $R(\mathbf{A})$. However, $\mathcal{E}'$ contains $N+1$ functions and therefore cannot be a basis of $R(\mathbf{A})$, which is $N$-dimensional. Removing any of the functions in $\mathcal{E}'$ that is linearly dependent on the remaining functions in $\mathcal{E}'$ would yield the required new basis. Finally, note that $u^+$ and $u^-$ cannot both be linearly dependent on the remaining functions $\mathcal{E}-\{e_r,e_s\}$ because in this case, $e_s$ would be linearly dependent on $\mathcal{E}-\{e_s\}$, contradicting the premise that $\mathcal{E}$ is a set of basis functions.
\item
We shall further split this part into two cases:
\begin{description}
\item[Case 1] {\it $\{e_r,e_s\}$ is linearly dependent on Supp $e_r$}\\
Let $e_s=f+\alpha e_r$, where $f\geq 0$ and $\text{Supp}\ f\cap \text{Supp}\ e_r=\phi$. Since $f=e_s-\alpha e_r$ and both $e_s,e_r\in R(\mathbf{A})$, we must have $f\in R(\mathbf{A})$. Therefore, $\{f,e_r\}\in R(\mathbf{A})$ are nonnegative with orthogonal support sets and are hence linearly independent. Consequently, we can enter $f$ and remove $e_s$  from $\mathcal{E}$ to obtain a new set of basis functions. Please note that $f\notin \mathcal{E}-\{e_s\}$ as otherwise $e_s$ would lie in the linear span of $\mathcal{E}-\{e_s\}$, contradicting our premise that $\mathcal{E}$ is a set of basis functions. 
\item[Case 2] 
{\it $\{e_r,e_s\}$ is linearly independent on Supp $e_r$}\\
Let $e_s=f+g$, where $f\geq0$, $g\geq 0$, $\text{Supp}\ g= \text{Supp}\ e_r$ and $\text{Supp}\ f= \text{Supp}\ e_s\cap (\text{Supp}\ e_r)^c$.  Then, 
\begin{align*}
e_s-pe_r=f+g-pe_r&=\underbrace{f}_{>0}+\underbrace{(g-pe_r)}_{<0}\\
&=u^+-u^-,
\end{align*}
where $p$ is chosen such that $g-pe_r<0$ (Note that we can always choose such a $p$ because otherwise $g-ke_r\geq 0$, for all $k$, which is not possible). Thus, $u^+=f$ and $u^-=-(g-pe_r)$ are the positive and negative components of the mixed function $e_s-pe_r$. Consequently, from Lemma~\ref{LemmaPositiveAndNegativePartInRange}, $u^+=f\in R(\mathbf{A})$ and $u^-=-(g-pe_r)\in R(\mathbf{A})$. However, $f\in R(\mathbf{A})$ implies $g\in R(\mathbf{A})$ (since $e_s=f+g\in R(\mathbf{A})$). Moreover, since $\{e_r,g\}$ is linearly independent, $e_r-p'g=w^+-w^-$ is a mixed vector in $R(\mathbf{A})$ (for some $p'$), and therefore, $\{w^+,w^-,f\}\subseteq R(\mathbf{A})$ are linearly independent functions which are nonnegative and orthogonal. 
\end{description} 

\item
Let 
\begin{align*}
e_r&=f_r+h_r,\quad \text{where}\\
f_r&=e_r\big|_{\text{Supp}\ e_r-(\text{Supp}\ e_r\cap\text{Supp}\ e_s)}\quad \text{and}\\
h_r&=e_r\big|_{ \text{Supp}\ e_r\cap\text{Supp}\ e_s}.
\end{align*}
Further, let
\begin{align*}
e_s&=f_s+h_s\\
&=f_s+\alpha h_r\quad \text{where}\\
h_s&=\alpha h_r\big|_{\text{Supp}\ e_r\cap\text{Supp}\ e_s}\quad \text{and}\\
f_s&=e_s\big|_{\text{Supp}\ e_s-(\text{Supp}\ e_r\cap\text{Supp}\ e_s)}.
\end{align*}
Then, consider
\begin{align*}
e_r-\frac{1}{\alpha}e_s&=f_r+h_r-\frac{f_s}{\alpha}-h_r\\
&=\underbrace{f_r}_{\text{+ve part}} - \underbrace{\frac{f_s}{\alpha}}_{\text{-ve part}},
\end{align*}
which, due to Lemma~\ref{LemmaPositiveAndNegativePartInRange}, implies $f_r,f_s\in R(\mathbf{A})$, and consequently, $h_r\in R(\mathbf{A})$. Therefore, $f_r,f_s,h_r\in R(\mathbf{A})$ are nonnegative functions with orthogonal support sets.  

\item
Let 
\begin{align*}
e_r&=f_r+h_r,\quad\text{and}\\
e_s&=f_s+h_s\quad \text{where}\\
f_r&=e_r\big|_{\text{Supp}\ e_r-(\text{Supp}\ e_r\cap\text{Supp}\ e_s)}\\
h_r&=e_r\big|_{ \text{Supp}\ e_r\cap\text{Supp}\ e_s}\\
f_s&=e_s\big|_{\text{Supp}\ e_s-(\text{Supp}\ e_r\cap\text{Supp}\ e_s)}\quad {\text{and}}\\
h_s&=e_s\big|_{ \text{Supp}\ e_r\cap\text{Supp}\ e_s}.
\end{align*}
Further, let $\alpha$ be a real scalar satisfying $h_r>\alpha h_s$ (note that such an $\alpha$ would always exist). Then, 
\begin{align*}
e_r-\alpha e_s&=f_r+h_r -\alpha f_s - \alpha h_s\\
&=\underbrace{f_r+h_r-\alpha h_s}_{\text{+ve part}} -\underbrace{\alpha f_s}_{\text{-ve part}}
\end{align*} 
is a mixed function, and therefore $\{f_r+h_r-\alpha h_s,f_s\}\subseteq R(\mathbf{A})$. However, $f_s\in R(\mathbf{A})$ implies (since $e_s\in R(\mathbf{A})$) that $h_s=e_s-f_s\in R(\mathbf{A})$. Similarly, by considering $e_s-\alpha e_r$, we can show that $f_r,h_r\in R(\mathbf{A})$. Finally, since $\{h_r,h_s\}$ is linearly independent, we can construct a mixed function $x=h_r-\beta h_s=x^+-x^-$ (where $x^+$ and $x^-$ are the positive and negative components, respectively, of $x$), for some $\beta$, so that $\{x^+,x^-,f_r,f_s\}$ are four nonnegative orthogonal functions  in $R(\mathbf{A})$ which are all obtained from $\{e_r,e_s\}$.
\end{enumerate}
\end{proof}
Note that we can apply the analysis of the cases \ref{MainLemma}\ref{MainLemmaParta} and \ref{MainLemma}\ref{MainLemmaPartb}, respectively,  to (a) any pair of basis elements in $R(\mathbf{A})$ with identical supports, and (b) any pair of basis elements in $R(\mathbf{A})$ where support of one of the element is contained in the support of the other element, so that, eventually, every pairwise overlap of the supports of basis functions is partial in nature. The cases of pairwise partial overlap among supports of basis functions has been covered in parts \ref{MainLemma}\ref{MainLemmaPartbdash} and \ref{MainLemma}\ref{MainLemmaPartc}. We next state the main result of this section as  Theorem~\ref{TheoremOrthogonalityOfBasisElements} and present an algorithm that  deploys Lemmas~\ref{MainLemma}\ref{MainLemmaPartbdash} and \ref{MainLemma}\ref{MainLemmaPartc} in an iterative manner to systematically replace every pair of basis functions with partially overlapping supports in the basis by certain new set of basis functions that are nonnegative and have nonoverlapping supports, so that eventually all the basis functions are nonnegative and have nonoverlapping supports.  

\begin{theorem}
\label{TheoremOrthogonalityOfBasisElements}
Let $\mathcal{E}=\{\mathbf{e}_1,\mathbf{e}_2,\ldots,\mathbf{e}_N\}$ be a set of nonnegative basis functions in the range space $\mathcal{R}(\mathbf{A})$ of a nonnegative compact $r$-potent operator $\mathbf{A}$. Then, there must exist an alternate set of basis functions $\mathcal{E}'=\{\mathbf{e}_1',\mathbf{e}_2',\ldots,\mathbf{e}_N'\}$ such that $\mathbf{e}_j',$ for all $j$, are nonnegative and have nonoverlapping supports. 
\end{theorem}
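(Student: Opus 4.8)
The plan is to run an iterative support-splitting procedure driven by Lemma~\ref{ConvertOverlappingElementsIntoNonOverlappingElements} and to prove that it halts. By Corollary~\ref{LemmaCanAlwaysGetAllElementsNonnegative} I may start from a basis $\mathcal{E}$ in which every $\mathbf{e}_j$ is nonnegative, so the only defect left to remove is pairwise overlap of supports. The algorithm is: while $\mathcal{E}$ contains two functions $e_r,e_s$ with $\mu(\mathrm{Supp}\,e_r\cap\mathrm{Supp}\,e_s)>0$, apply whichever of the four cases of Lemma~\ref{ConvertOverlappingElementsIntoNonOverlappingElements} matches the configuration of the pair---case~\ref{MainLemmaParta} when the supports coincide, \ref{MainLemmaPartb} when one contains the other, and \ref{MainLemmaPartbdash} or \ref{MainLemmaPartc} for genuine partial overlap according to whether $e_r,e_s$ are dependent or independent on the common part---then replace the two overlapping functions by the new nonnegative functions the lemma produces and discard any linearly dependent functions so as to keep a basis of the $N$-dimensional space $R(\mathbf{A})$. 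Every step leaves a nonnegative basis, so once the loop exits the resulting $\mathcal{E}'$ is exactly the claimed orthogonal nonnegative basis.

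The substance of the proof, and the only genuine obstacle, is showing that this loop terminates. Strict decrease of the total support measure $\sum_j\mu(\mathrm{Supp}\,\mathbf{e}_j)$ holds at every step but is useless, since the decrements (the measures of the overlaps being removed) need not be bounded away from zero. Instead I would use a monotone \emph{integer} potential, and the structural fact that makes this possible is that $R(\mathbf{A})$ is a finite-dimensional vector sublattice of $\mathcal{L}^2(\mathcal{X})$: it is a subspace, and by Lemma~\ref{LemmaPositiveAndNegativePartInRange} it is closed under $f\mapsto f^+$, hence under $\vee$ and $\wedge$. Consequently the supports of its nonnegative members form a lattice of sets closed under finite unions and intersections; any family of nonnegative members with pairwise disjoint supports is linearly independent, so this set-lattice has at most $N$ minimal nonzero members, which I call the atoms $B_1,\ldots,B_M$ (with $M\le N$, each $\mu(B_i)>0$, and the $B_i$ pairwise disjoint). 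A short argument shows that $R(\mathbf{A})$ restricted to any atom is one-dimensional. Crucially, every function the algorithm ever produces lies in $R(\mathbf{A})$---including the positive and negative parts generated in cases~\ref{MainLemmaPartbdash} and \ref{MainLemmaPartc}, which are again lattice elements---so every support occurring during the run is a union of these fixed atoms; in particular case~\ref{MainLemmaPartc} splits an overlap only along existing atom boundaries and never manufactures a finer piece. This is exactly the point at which the lattice property rescues termination.

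I would then define $\Phi(\mathcal{E})=\sum_{j=1}^{N}\#\{\,i:B_i\subseteq\mathrm{Supp}\,\mathbf{e}_j\,\}$, the total number of atom-memberships. Since each support is a nonempty union of atoms, $\Phi(\mathcal{E})\ge N$, with equality precisely when each $\mathbf{e}_j$ occupies a single atom; as distinct basis functions cannot share one atom (an atom carries only a one-dimensional space, so they would be proportional), equality forces the supports to be pairwise nonoverlapping. The remaining task is to verify case by case that one application of Lemma~\ref{ConvertOverlappingElementsIntoNonOverlappingElements} strictly lowers $\Phi$: the two functions removed from $\mathcal{E}$ count the atoms of the overlap \emph{twice}, whereas the replacement functions have pairwise disjoint supports whose union lies in $\mathrm{Supp}\,e_r\cup\mathrm{Supp}\,e_s$ and therefore count each such atom \emph{once}, while discarding the forced linearly dependent functions only removes further atom-memberships. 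Hence $\Phi$ drops by at least the atom-count of the overlap, which is at least $1$. Being a positive integer bounded below by $N$, $\Phi$ can be decreased only finitely often, so the loop halts after at most $\Phi(\mathcal{E})-N$ iterations; at that point $\Phi=N$ and the terminal basis $\mathcal{E}'$ is nonnegative with pairwise nonoverlapping supports, as required. The main work, then, is the bookkeeping just sketched---checking that the strict decrease survives the ``introduce several new functions, then drop the redundant ones'' step uniformly across the four cases, together with rigorously establishing the finiteness, one-dimensionality, and atomicity of the support set-lattice.
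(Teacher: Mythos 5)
Your proposal is correct in substance and shares the paper's overall skeleton --- iterate Lemma~\ref{ConvertOverlappingElementsIntoNonOverlappingElements} on overlapping pairs until none remain --- but it settles the termination question by a genuinely different mechanism. The paper organizes the iteration so that a pool $\mathcal{E}_{Orth}$ of already pairwise-orthogonal functions only ever grows: each function drawn from the non-orthogonal pool is played against the members of $\mathcal{E}_{Orth}$ one at a time, and a counter bounded above by $N$ does all the work. You instead allow the lemma to be applied to an \emph{arbitrary} overlapping pair and prove termination with an integer monovariant $\Phi$ built from the atoms of the lattice of supports of nonnegative elements of $R(\mathbf{A})$, which is available because $R(\mathbf{A})$ is closed under $f\mapsto f^+$ (Lemma~\ref{LemmaPositiveAndNegativePartInRange}) and is therefore a finite-dimensional Riesz subspace of $\mathcal{L}^2(\mathcal{X})$. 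Your route costs more infrastructure: the finiteness and atomicity of that support lattice --- in particular that \emph{every} support is a union of atoms, which is not automatic since the lattice is closed under unions and intersections but not obviously under relative complements, and needs something like a stabilization argument on the decreasing supports of $(f-ng)^+$ --- and the one-dimensionality of $R(\mathbf{A})$ restricted to an atom are real obligations you defer. In exchange it buys an order-independent argument with an explicit bound on the number of iterations, and it sidesteps the paper's informal inner loop (``repeat the same argument for $\{h',f_2'\}$ \ldots and so on''), which is the least rigorous part of the published proof; indeed, once the atomic structure is in place your argument is essentially the classical structure theorem for finite-dimensional Archimedean vector lattices, from which the theorem follows directly. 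One small imprecision: at loop exit you need not have $\Phi=N$; the loop stops as soon as no overlapping pair remains, which is all the theorem requires.
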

\begin{proof}
We prove this theorem by providing a constructive algorithm to obtain a basis with all elements nonnegative and having  nonoverlapping supports from any given basis with all nonnegative elements:
\section*{Algorithm}
Given the set of basis function $\mathcal{E}$, we create two groups of functions: $\mathcal{E}_{Orth}$ containing functions that are all nonnegative and orthogonal, and $\mathcal{E}_{NonOrth}$ containing the remaining functions that may have partially overlapping supports. Naturally, the sum of the number of functions in $\mathcal{E}_{Orth}$ and $\mathcal{E}_{NonOrth}$ would be $N$ (that is, the dimension of $R(\mathbf{A}))$. The key idea behind our algorithm is to randomly select functions from $\mathcal{E}_{NonOrth}$, one at a time, and convert them into function(s) in $R(\mathbf{A})$ that have nonoverlapping supports with all functions already in $\mathcal{E}_{Orth}$ and can therefore be included in $\mathcal{E}_{Orth}$. This conversion is applied in an iterative fashion so that eventually all functions are nonnegative and have mutually orthogonal supports. Specifically, our algorithm consists of following steps:
\begin{description}
\item[Step 1]  
We begin with a single basis function, say $e_1$, in $\mathcal{E}_{Orth}$. We initialize two counters
\begin{description}
\item[{\it Counter 1}] Number of functions in $\mathcal{E}_{Orth}$
\item[{\it Counter 2}] Number of functions in $\mathcal{E}_{NonOrth}$ 
\end{description}
with {\it Counter 1} $=1$ and {\it Counter 2} $=N-1$.\\

\item[Step 2] 
We randomly pick one basis function, say $e_2$, from $\mathcal{E}_{NonOrth}$. Then, $\text{Supp}\ e_2$ is either orthogonal  to $\text{Supp}\ e_1$ or partially overlaps with $\text{Supp}\ e_1$. We shall consider this scenario in three   mutually exclusive and exhaustive cases:
\begin{description}
\item[Case I]{\it $e_1$ and $e_2$ are orthogonal}\\
In this case, $e_2$ is removed from $\mathcal{E}_{NonOrth}$ and inserted in $\mathcal{E}_{Orth}$. 
\item[Case II]{\it $\{e_1,e_2\}$ is linearly dependent on $\text{Supp}\ e_1\cap \text{Supp}\ e_2$}\\
By Lemma~\ref{MainLemma}\ref{MainLemmaPartbdash}, we can get three orthogonal nonnegative functions, say $f_1,f_2,f_3$, in $R(\mathbf{A})$, from $e_1$ and $e_2$. Therefore, we can replace $e_1$ in $\mathcal{E}_{Orth}$ by $\{f_1,f_2,f_3\}$, and drop $e_2$ from $\mathcal{E}_{NonOrth}$. In doing so however, the total number of functions in $\mathcal{E}_{Orth}$ and $\mathcal{E}_{NonOrth}$ becomes $3 + (N-2)=N+1$. Since the range space $R(\mathbf{A})$ is $N$-dimensional, one of the functions in $\mathcal{E}_{NonOrth}$ must be linearly dependent on the remaining $N$ functions. We shall drop this function from $\mathcal{E}_{NonOrth}$ so that $\mathcal{E}_{Orth}$ and $\mathcal{E}_{NonOrth}$ are together a set of basis functions.
\item[Case III]{\it $\{e_1,e_2\}$ is linearly independent on $\text{Supp}\ e_1\cap\text{Supp}\ e_2$}\\
By Lemma~\ref{MainLemma}\ref{MainLemmaPartc}, we can get four nonnegative orthogonal functions, say $g_1,g_2,g_3,g_4$ in $R(\mathbf{A})$ from $e_1$ and $e_2$. Therefore, we can replace $e_1$  in $\mathcal{E}_{Orth}$ by $\{g_1,g_2,g_3,g_4\}$, and drop $e_2$ from $\mathcal{E}_{NonOrth}$. In doing so however, the total number of functions in $\mathcal{E}_{Orth}$ and $\mathcal{E}_{NonOrth}$ becomes $4 + (N-2)=N+2$. Since the range space $R(\mathbf{A})$ is $N$-dimensional, there would be two functions in $\mathcal{E}_{NonOrth}$ that are linearly dependent on the remaining $N$ functions. We shall drop these two functions from $\mathcal{E}_{NonOrth}$ so that $\mathcal{E}_{Orth}$ and $\mathcal{E}_{NonOrth}$ are together a set of basis functions.
\end{description}
Our analysis for Cases I, II and III show that {\it Counter 1} would now become either 2, 3 or 4, that is, {\it Counter 1} increases by at least one in Step 2 and, correspondingly, {\it Counter 2} reduces by at least one. 

\item[Step 3] We select another function, say $e_3$, from $\mathcal{E}_{NonOrth}$. Then, $e_3$ would be either orthogonal to all functions in $\mathcal{E}_{Orth}$ or $\text{Supp}\ e_3$ would partially overlap with one or more of the functions in $\mathcal{E}_{Orth}$. If $e_3$ is orthogonal to all functions in $\mathcal{E}_{Orth}$, it can be directly removed from $\mathcal{E}_{NonOrth}$ and inserted in $\mathcal{E}_{Orth}$, thereby increasing {\it Counter 1} by $1$ and reducing {\it Counter 2} by $1$. If, on the other hand, $\text{Supp}\ e_3$  overlaps with one of the functions in $\mathcal{E}_{Orth}$, say $h$, then we shall have the following two cases:
\begin{description}
\item[Case I] {\it $\{h,e_3\}$ is linearly dependent on $\text{Supp}\ h\cap \text{Supp}\ e_3$}\\
By Lemma~\ref{MainLemma}\ref{MainLemmaPartbdash}, we can get three orthogonal basis functions from $h$ and $e_3$, namely, $f_1',f_2',f_3'$ such that
\begin{align*}
 \text{Supp}\ f_1' &\subseteq \text{Supp}\ h\cap (\text{Supp}\ e_3)^c\\
 \text{Supp}\ f_2' &\subseteq\text{Supp}\ e_3\cap (\text{Supp}\ h)^c\\
 \text{Supp}\ f_3' &\subseteq\text{Supp}\ e_3\cap \text{Supp} h,
 \end{align*}
 respectively. 
\item[Case II]{\it $\{h,e_3\}$ is linearly independent on $\text{Supp}\ h\cap \text{Supp}\ e_3$}\\
As $\{h,e_3\}$ is linearly independent on $\text{Supp}\ h\cap \text{Supp}\ e_3$, 
by Lemma~\ref{MainLemma}\ref{MainLemmaPartc}, 
we would get four orthogonal basis functions, namely, $g_1',g_2',g_3',g_4'$ such that 
\begin{align*}
\text{Supp}\ g_1'  &\subseteq \text{Supp}\ h\cap (\text{Supp}\ e_3)^c\\
\text{Supp}\ g_2' &\subseteq\text{Supp}\ e_3\cap (\text{Supp}\ h)^c\\
\text{Supp}\ g_3' &\subseteq\text{Supp}\ e_3\cap \text{Supp} h\\
\text{Supp}\ g_4' &\subseteq\text{Supp}\ e_3\cap \text{Supp} h,
\end{align*}
respectively.
\end{description}
If $\text{Supp}\ e_3$ overlaps with another function (say $h'$) in  $\mathcal{E}_{Orth}$, then it would be on $\text{Supp}\ e_3\cap (\text{Supp} h)^c$ (recall $\{h,h'\}$ have orthogonal supports). 
In addition, in Case I above, $(\text{Supp}\ h'$ $\cap \text{Supp}\ e_3)\subseteq \text{Supp}\ f_2'$ and therefore we can repeat the same argument as above for $\{h',f_2'\}$ rather than $\{h,e_3\}$ to obtain three new orthogonal functions, and so on. In Case II above,   
We have $(\text{Supp}\ h'\cap \text{Supp}\ e_3)\subseteq \text{Supp}\ g_2'$ and therefore we can repeat the same argument as above for $\{h',g_2'\}$ rather than $\{h,e_3\}$ to obtain four new orthogonal functions, and so on. The above process is repeated till all the overlaps between $e_3$ and functions of $\mathcal{E}_{Orth}$ are eliminated. The resulting orthogonal functions are aggregated in the new $\mathcal{E}_{Orth}$ and any set of functions in $\mathcal{E}_{NonOrth}$ that are all linearly dependent on the remaining $N$ functions in $\mathcal{E}_{Orth}$ and $\mathcal{E}_{NonOrth}$, are dropped.  
\end{description}
By performing the above step, we get a new $\mathcal{E}_{Orth}$ where number of functions in definitely greater than the number of functions in $\mathcal{E}_{Orth}$ obtained after Step 2. That is, due to Step 3, {\it Counter 1} increases by at least one, {\it Counter 2} decreases by the same number as the increase in {\it Counter 1} and the functions in $\mathcal{E}_{Orth}\cup \mathcal{E}_{NonOrth}$ still constitute a basis.

Finally, note that repeating Step 3 above for every remaining function in $\mathcal{E}_{NonOrth}$, one at a  time, we will eventually get {\it Counter 1} $= N$ (recall that number of basis functions cannot exceed the dimension of $R(\mathbf{A})$, which is $N$). The $\mathcal{E}_{Orth}$ so obtained would be the required basis of $R(\mathbf{A})$ with all functions nonnegative and mutually orthogonal. Therefore, the proof for Theorem~\ref{TheoremOrthogonalityOfBasisElements} is complete.
\end{proof}

\section{Decomposability of $r$-Potent Operators}
\label{Sec:DecomposabilityOf-r-PotentOperators}
Our main result in this section is stated as the following theorem: 
\begin{theorem}
If there exists a basis $\{\mathbf{e}_1,\mathbf{e}_2,\ldots,\mathbf{e}_N\}$ in the range space $R(\mathbf{A})$ of a nonnegative compact $r$-potent operator $\mathbf{A}$ with $r\leq N$ such that $\mathbf{e}_j$, for all $j$, are nonnegative and have non-overlapping supports, then $\mathbf{A}$ must be decomposable over some support set $\mathcal{U}$. 
\end{theorem}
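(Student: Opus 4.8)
The plan is to reduce the infinite-dimensional statement to a finite combinatorial fact about how $\mathbf{A}$ permutes the orthogonal basis of its range. First I would invoke Theorem~\ref{TheoremOrthogonalityOfBasisElements} to assume the given basis $\{e_1,\dots,e_N\}$ consists of nonnegative functions with pairwise nonoverlapping supports $S_j=\text{Supp }e_j$, and write $S=\bigcup_j S_j$. Since each $e_j\in R(\mathbf{A})$, we have $\mathbf{A}e_j\in R(\mathbf{A})$, so $\mathbf{A}e_j=\sum_k a_{kj}e_k$ for a real matrix $M=[a_{kj}]$. Because $\mathbf{A}$ is nonnegative and the $e_k$ have disjoint supports with $e_k>0$ on $S_k$, comparing both sides on each $S_k$ forces $a_{kj}\ge 0$, so $M\ge 0$ entrywise. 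The identity $\mathbf{A}^{r-1}=\mathbf{I}$ on $R(\mathbf{A})$ translates to $M^{r-1}=I_N$.

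The key structural step is to show that $M$ is a monomial (generalized permutation) matrix. Since $M^{r-1}=I_N$ we have $M^{-1}=M^{r-2}$, which is a product of nonnegative matrices and hence nonnegative; a nonnegative matrix whose inverse is also nonnegative must have exactly one positive entry in each row and in each column. Thus there is a permutation $\sigma$ of $\{1,\dots,N\}$ and positive scalars $d_j$ with $\mathbf{A}e_j=d_j e_{\sigma(j)}$. Feeding this back into $\mathbf{A}^{r-1}e_j=e_j$ yields $\sigma^{r-1}=\mathrm{id}$, so every cycle of $\sigma$ has length dividing $r-1$ and therefore of length at most $r-1$. Here the hypothesis $r\le N$ (equivalently $N>r-1$) is decisive: no single cycle can exhaust all $N$ indices, so $\sigma$ has at least two orbits.

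With a nonempty orbit $O$ of $\sigma$ that is a proper subset of $\{1,\dots,N\}$ in hand, I would set $U=\bigcup_{j\in O}S_j$ and verify the criterion of Definition~\ref{definitionOfDecomposabilityByAlkaMarwaha}. Clearly $\mu(U)>0$ and, since $O$ is proper, $\mu(U^c)>0$. Writing $\mathbf{A}\chi_U=\sum_j c_j e_j$ with $c_j\ge 0$ (as $\chi_U\ge 0$), I would compute $c_j\|e_j\|^2=\langle\chi_U,\mathbf{A}^*e_j\rangle$. A short lemma shows $\mathbf{A}^*$ is again nonnegative, and from $\langle e_k,\mathbf{A}^*e_j\rangle=\langle\mathbf{A}e_k,e_j\rangle=d_k\|e_j\|^2\delta_{j,\sigma(k)}$ the nonnegative function $\mathbf{A}^*e_j$ is orthogonal to every $e_k$ with $k\neq\sigma^{-1}(j)$, hence vanishes on $S$ off $S_{\sigma^{-1}(j)}$. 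For $j\notin O$ we have $\sigma^{-1}(j)\notin O$, so $\mathbf{A}^*e_j$ vanishes on $U$ and $c_j=0$. Consequently $\langle\mathbf{A}\chi_U,\chi_{U^c}\rangle=\sum_{j\in O}c_j\int_{U^c}e_j=0$, because each $e_j$ with $j\in O$ is supported in $U$. This exhibits the required standard invariant subspace $\mathcal{L}^2(U)$.

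The main obstacle is the monomial-matrix step: it is exactly here that the finite-dimensional proof relied on Perron--Frobenius, and the replacement input is the elementary but essential fact that a nonnegative matrix with nonnegative inverse is a generalized permutation matrix. Everything after that is bookkeeping with disjoint supports, though one must handle the degenerate cases carefully (for instance $r=2$, where $M=I$ and $\sigma$ is the identity with $N$ singleton orbits) and confirm that the nonnegativity of $\mathbf{A}^*$ and the orbit structure of $\sigma$ interact precisely as needed to annihilate the cross term $\langle\mathbf{A}\chi_U,\chi_{U^c}\rangle$.
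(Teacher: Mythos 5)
Your proposal is correct, and its overall architecture coincides with the paper's: represent $\mathbf{A}|_{R(\mathbf{A})}$ by a nonnegative matrix $M$ in the orthogonal nonnegative basis, show $M$ is a generalized permutation matrix, deduce from $\sigma^{r-1}=\mathrm{id}$ and $N>r-1$ that $\sigma$ has a proper orbit, and take $\mathcal{U}$ to be the union of the supports of the basis functions in that orbit (the paper realizes the same set as $\mathrm{Supp}\,(e_i+\mathbf{A}e_i+\cdots+\mathbf{A}^{r-2}e_i)$). The genuine difference lies in how the monomial structure is established, which is exactly the step the paper identifies as the substitute for Perron--Frobenius. The paper argues by contradiction: assuming two coefficients $\alpha_p^i,\alpha_q^i>0$, it tracks strictly positive coefficient products through $\mathbf{A},\mathbf{A}^2,\ldots,\mathbf{A}^{r-1}$ and derives a violation of linear independence; this occupies most of the proof and must be run separately for $r=3$ and $r>3$. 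You instead observe that $M^{-1}=M^{r-2}\geq 0$ and invoke the elementary fact that a nonnegative invertible matrix with nonnegative inverse has exactly one positive entry per row and column; this is shorter, uniform in $r$, and makes the role of nonnegativity transparent, at the cost of importing one (easily proved) linear-algebra lemma. Your final verification also differs in mechanics -- you expand $\mathbf{A}\chi_{\mathcal{U}}$ in the basis and kill the coefficients outside the orbit via the support of $\mathbf{A}^*e_j$, whereas the paper applies $\mathbf{A}^*\chi_{\mathcal{U}^c}$ against the $\mathbf{A}$-fixed positive function $e_i+\mathbf{A}e_i+\cdots+\mathbf{A}^{r-2}e_i$ -- but both are adjoint arguments resting on the nonnegativity of $\mathbf{A}^*$ and both correctly verify the criterion of Definition~\ref{definitionOfDecomposabilityByAlkaMarwaha}. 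One small economy: since the theorem's hypothesis already supplies a nonnegative basis with nonoverlapping supports, your appeal to Theorem~\ref{TheoremOrthogonalityOfBasisElements} at the outset is not needed.
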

\begin{proof}
We will first prove the above theorem for the case where $r=3$ and will then generalize the proof to $r>3$ case.
\section*{Case: $r=3$}
We start by noting that $\mathbf{A}e_i\in R(\mathbf{A})$, for $i=1,\ldots,N$. Therefore,
\begin{align}
\mathbf{A}e_i=\alpha_1^i e_1 +\alpha_2^ie_2+\cdots + \alpha_N^ie_N,
\end{align}
for some $\alpha_1^i,\ldots,\alpha_N^i\geq 0$ (since $\mathbf{A}$ is a nonnegative operator and $e_j$, for all $j$, are nonnegative orthogonal). 

We next claim that $\mathbf{A}e_i$ must lie in the linear span of exactly one $e_j (j=1,\ldots,N)$, that is, there must exist only one $j_o$ such that $\alpha_j^i\neq 0,$ for $j=j_o$, and $\alpha_j^i=0$, for all $j\neq j_o$. We can prove this claim as follows:

First, consider that
\begin{align}
\label{EqnTripotent-ei-InTermsOfOtherElements}
e_i=\mathbf{A}^2e_i&=\alpha_1^i\mathbf{A}e_1 +\alpha_2^i\mathbf{A}e_2+\cdots+\alpha_N^i\mathbf{A}e_N.
\end{align}
Now suppose that two coefficients, say $\alpha_p^i$ and $\alpha_q^i$, are both greater than zero, and rewrite Eqn.~(\ref{EqnTripotent-ei-InTermsOfOtherElements}) as
\begin{align}
\label{EqnTripotent-ei-InTermsOf-er-es-andOtherElements}
e_i=\mathbf{A}^2e_i&=\alpha_p^i\mathbf{A}e_p + \alpha_q^i\mathbf{A}e_q + \sum_{j\neq p,q}\alpha_j^i\mathbf{A}e_j.
\end{align}
However, since both $\mathbf{A}e_p,\mathbf{A}e_q\in R(\mathbf{A})$, we can write
\begin{align}
\label{EqnTripotentWriting-er-InTermsOfOtherElements}
\mathbf{A}e_p&=\alpha_1^pe_1+\alpha_2^pe_2+\cdots+\alpha_N^pe_N\\
\label{EqnTripotentWriting-es-InTermsOfOtherElements}
\mathbf{A}e_q&=\alpha_1^qe_1+\alpha_2^qe_2+\cdots+\alpha_N^qe_N.
\end{align}
Substituting Eqns.~(\ref{EqnTripotentWriting-er-InTermsOfOtherElements}) and (\ref{EqnTripotentWriting-es-InTermsOfOtherElements}) into Eqn.~(\ref{EqnTripotent-ei-InTermsOf-er-es-andOtherElements}), we get
\begin{align}
e_i&=\alpha_p^i(\alpha_1^pe_1+\alpha_2^pe_2+\cdots+\alpha_N^pe_N)\nonumber\\
&+\alpha_q^i(\alpha_1^qe_1+\alpha_2^qe_2+\cdots+\alpha_N^qe_N)\nonumber\\
&+\sum_{j\neq p,q}\alpha_j^i\mathbf{A}e_j.
\end{align}
As $\{e_1,e_2,\ldots,e_N\}$ is linearly independent and $\alpha_j^i$ are all nonnegative, we must have 
\begin{align}
\alpha_1^p&=\alpha_2^p=\cdots=\alpha_{i-1}^p=\alpha_{i+1}^p=\cdots=\alpha_N^p=0\quad\text{and}\nonumber\\
\alpha_1^q&=\alpha_2^q=\cdots=\alpha_{i-1}^q=\alpha_{i+1}^q=\cdots=\alpha_N^q=0
\end{align}
which implies
\begin{align}
\mathbf{A}e_p&=\alpha_i^pe_i\quad\text{and}\nonumber\\
\mathbf{A}e_q&=\alpha_i^qe_i
\end{align}
where both $\alpha_i^p,\alpha_i^q>0$ (otherwise, $\mathbf{A}$ would not be $3$-potent). Therefore, 
\begin{align}
e_p&=\mathbf{A}^2e_p\\
&=\alpha_i^p\mathbf{A}e_i\\
&=\alpha_i^p(\alpha_1^ie_1+\alpha_2^ie_2+\cdots +\alpha_N^ie_N)\\
&=\alpha_i^p\left(\alpha_p^ie_p+\alpha_q^ie_q+\sum_j\alpha_j^ie_j\right),
\end{align}
which, again using linearly independence of $\{e_1,e_2,\ldots,e_N\}$ implies $\alpha_i^p\alpha_q^i=0$. However, this is a contradiction to our assumption that both $\alpha_i^p,\alpha_i^q>0$, and therefore, our claim that $\mathbf{A}e_i$ must lie in the linear span of exactly one $e_j$ is proved. In other words, we must have a unique $j$ for which
\begin{align}
\mathbf{A}e_i&=\alpha_j^ie_j, \quad \text{where}\quad \alpha_j^i>0.
\end{align}

Note that if $\text{dim}\ R(\mathbf{A})=2$, then $\{e_1,e_2\}$ is a nonnegative orthogonal basis of $R(\mathbf{A})$. Therefore, due to the above proved claim, $\mathbf{A}e_1$ is equal to either $\alpha e_1$ or $\alpha'e_2$, for some $\alpha,\alpha'>0$. Similarly, $\mathbf{A}e_2$ is equal to either $\beta e_2$ or $\beta' e_1$, for some $\beta,\beta'>0$. 

But
\begin{align}
\mathbf{A}e_1&=\alpha e_1\\
\Rightarrow e_1&=\mathbf{A}^2e_1=\alpha\mathbf{A}e_1=\alpha^2e_1\\
\Rightarrow \alpha&=1 \quad\text{(since $\alpha >0$)},
\end{align}
which yields
\begin{align}
\mathbf{A}e_1&=e_1.
\end{align} 
In addition, $\mathbf{A}e_1=\alpha e_1$  also implies $\mathbf{A}e_2=\beta e_2$ (since $\mathbf{A}e_2=\beta' e_1$ would imply $e_2=\mathbf{A}^2e_2=\beta'\mathbf{A}e_1=\alpha\beta' e_1$, which would be a contradiction), which in turn implies
\begin{align}
e_2&=\mathbf{A}^2e_2=\beta\mathbf{A}e_2=\beta^2 e_2,
\end{align}
which, along with the condition $\beta >0$, implies $\mathbf{A}e_2=e_2$. That is, $\mathbf{A}=\mathbf{I}$ on $R(\mathbf{A})$. In other words, $\mathbf{A}$ is a 2-potent (or idempotent) operator which is already known to be decomposable. Therefore, we may assume that $\mathbf{A}e_1\neq \alpha e_1$ and $\mathbf{A}e_2\neq \beta e_2$. 

Consequently, 
\begin{align}
\mathbf{A}e_1&=\alpha'e_2\\
\Rightarrow e_1&=\mathbf{A}^2e_1=\alpha'\mathbf{A}e_2=\alpha'\beta'e_1,\quad \text{and}\\
 \mathbf{A}e_2&=\beta'e_1\\
\Rightarrow e_2&=\mathbf{A}^2e_2=\beta'\mathbf{A}e_1=\beta'\alpha'e_2.
\end{align}
Therefore, applying $\mathbf{A}$ to $e_1$ yields $e_2$ and applying $\mathbf{A}$ to $e_2$, in turn, yields $e_1$. This implies that if we need decomposability, we must have $\text{dim}\ R(\mathbf{A})>2$. 

In case $\text{dim}\ R(\mathbf{A})>2$, let us consider $e_1+\mathbf{A}e_1$. Then,
\begin{align}
\mu(\underbrace{\text{Supp}\ (e_1+\mathbf{A}e_1)}_{\mathcal{U}})&>0.
\end{align}
Also, since $\text{dim}\ R(\mathbf{A})>2$,
\begin{align}
\mu(\mathcal{U}^c)&>0.
\end{align}
Further, 
\begin{align}
\langle e_1+\mathbf{A}e_1,\chi_{\mathcal{U}^c}\rangle &=0\\
\Rightarrow \langle\mathbf{A}e_1+\mathbf{A}^2e_1,\chi_{\mathcal{U}^c}\rangle &=0\\
\Rightarrow \langle e_1+\mathbf{A}e_1,\mathbf{A}^*\chi_{\mathcal{U}^c}\rangle &=0\\
\label{EqnIntegralOf-e1-Ae1-EqualsZero}
\Rightarrow \int_{\mathcal{X}}(e_1+\mathbf{A}e_1)(x)\mathbf{A}^*\chi_{\mathcal{U}^c}(x)\mu(\mathrm{d}x) &=0
\end{align}
However, since $(e_1+\mathbf{A}e_1)(x)>0$ on $\mathcal{U}$, Eqn~(\ref{EqnIntegralOf-e1-Ae1-EqualsZero}) implies that
\begin{align}
\mathbf{A}^*\chi_{\mathcal{U}^c}(x)&=0\quad \text{a.e. on }\mathcal{U}\\
\Rightarrow \langle\mathbf{A}^*\chi_{\mathcal{U}^c},\chi_{\mathcal{U}}\rangle &=0\\
\Rightarrow \langle\chi_{\mathcal{U}^c},\mathbf{A}\chi_{\mathcal{U}}\rangle &=0,
\end{align}
and therefore, as per Defn.~\ref{definitionOfDecomposabilityByAlkaMarwaha}, the $3$-potent operator $\mathbf{A}$ is decomposable. We next generalise the above proof to the $r>3$ case.

\section*{Case: $r>3$}
We start by noting that $\mathbf{A}e_i\in R(\mathbf{A})$, for $i=1,2,\ldots,N$, and therefore
\begin{align}
\mathbf{A}e_i&=\alpha_1^ie_1+\alpha_2^ie_2+\cdots+\alpha_N^ie_N,
\end{align}
for some $\alpha_1^i,\ldots,\alpha_N^i\geq 0$, where at least one of $\alpha_1^i,\ldots,\alpha_N^i$ is strictly greater than zero or else $\mathbf{A}$ would not be an $r$-potent operator. 

We next claim that $\mathbf{A}e_i$ lies in the linear span of exactly one $e_j$ ($j=1,2,\ldots,N$), that is, there exists a unique $j_o$ such that $\alpha_j^i>0$, for $j=j_o$ and $\alpha_j^i=0$, for all $j\neq j_o$. We prove this claim by the method of contradiction. We assume that there exists two coefficients $\alpha_p^i$ and $\alpha_q^i$ such that both $\alpha_p^i,\alpha_q^i>0$. Then, 
\begin{align}
\label{EqnExpansionOfAei}
\mathbf{A}e_i&=\alpha_p^ie_p+\alpha_q^ie_q+\sum_{j\neq p,q}\alpha_j^ie_j
\end{align}
and therefore
\begin{align}
\label{EqnExpansionOfASquareei}
\mathbf{A}^2e_i&=\alpha_p^i\mathbf{A}e_p+\alpha_q^i\mathbf{A}e_q+\sum_{j\neq p,q}\alpha_j^i\mathbf{A}e_j\nonumber\\
&=\alpha_p^i(p_1e_p'+\text{NNOT}) +\alpha_q^i(q_1e_q'+\text{NNOT})+\text{NNOT},
\end{align}
where $e_p'$ and $e_q'$ are some basis functions whose coefficients are nonzero in the expansions of $\mathbf{A}e_p$ and $\mathbf{A}e_q$, respectively (recall that at least one of the coefficients must be strictly positive in every expansion else $\mathbf{A}$ would not be $r$-potent), $p_1,q_1$ are those strictly positive coefficients, and NNOT (NonNegative Other Terms) is a general term  introduced for the sake of convenience to denote any sum of nonnegative functions whose exact value is not relevant for further analysis in this proof. 

In a manner similar to expansions of $\mathbf{A}e_i$ and $\mathbf{A}^2e_i$ in  Eqns.~(\ref{EqnExpansionOfAei}) and (\ref{EqnExpansionOfASquareei}), respectively, we can write expansions of $\mathbf{A}^3e_i,\cdots,\mathbf{A}^{r-2}e_i$ as
\begin{align}
\mathbf{A}^3e_i&=\alpha_p^i\mathbf{A}^2e_p+\alpha_q^i\mathbf{A}^2e_q+\sum_{j\neq p,q}\alpha_j^i\mathbf{A}^2e_j\nonumber\\
&=\alpha_p^i(p_1\mathbf{A}e_p'+\text{NNOT})+\alpha_q^i(q_1\mathbf{A}e_q'+\text{NNOT}) +\text{NNOT}\nonumber\\
&=\alpha_p^i(p_1(p_2e_p''+\text{NNOT})+\text{NNOT}) \nonumber\\
&\qquad+\alpha_q^i(q_1(q_2e_q''+\text{NNOT})+\text{NNOT})+\text{NNOT} \\
&\qquad\qquad\qquad\qquad \Large{\vdots}\nonumber
\end{align}
\begin{align}
\label{EqnPartExpansionOfAr-2}
\mathbf{A}^{r-2}e_i&=\alpha_p^i\mathbf{A}^{r-3}e_p+\alpha_q^i\mathbf{A}^{r-3}e_q+\text{NNOT}\\
\label{EqnFundamentalExpansionOfAr-2}
&=\alpha_p^ip_1p_2\ldots p_{r-3}e_p +\alpha_q^iq_1q_2\ldots q_{r-3}e_q+\text{NNOT}\quad
\end{align}
where $\alpha_p^i,p_1,p_2,\ldots ,p_{r-3}$ and $\alpha_q^i,q_1,q_2,\ldots ,q_{r-3}$ are all chosen to be positive (note that we can do so because for any $e_j$, $\mathbf{A}e_j$ must have at least one coefficient $\alpha_j^k>0$ else $\mathbf{A}$ would not be $r$-potent).
Applying $\mathbf{A}$ to Eqn.~(\ref{EqnFundamentalExpansionOfAr-2}) yields
\begin{align}
e_i&=\mathbf{A}^{r-1}e_i\\
\label{EqnFundamentalExpansionOfAr-1}
&=\alpha_p^ip_1p_2\ldots p_{r-3}\mathbf{A}e_p +\alpha_q^iq_1q_2\ldots q_{r-3}\mathbf{A}e_q+\text{NNOT}
\end{align}
In Eqn~(\ref{EqnFundamentalExpansionOfAr-1}), at least one of the coefficients in expansion of both $\mathbf{A}e_p$ and $\mathbf{A}e_q$ is nonzero. Without loss of generality, let the term corresponding to the nonzero coefficient of $\mathbf{A}e_p$ be $\beta_me_m$ and the term corresponding to nonzero coefficient of $\mathbf{A}e_q$ by $\gamma_ne_n$. Then,
\begin{align}
e_i&=(\alpha_o^ip_1p_2\ldots p_{r-3}\beta_m)e_m +(\alpha_q^iq_1q_2\ldots q_{n-3}\gamma_n)e_n+\text{NNOT}.
\end{align} 
As the coefficients of both $e_m$ and $e_n$ in the above equation are strictly greater than zero, we must have both $m=i$ and $n=i$ (otherwise, due to linear independence of $e_1,\ldots ,e_N$, all the coefficients would be zero). 
Therefore, 
\begin{align}
\mathbf{A}e_p&=\beta_ie_i
\end{align} 
since $\mathbf{A}e_p$ will not involve any other nonzero term because then, as earlier, there will be a contradiction to our assertion that $e_j$ for all $j$, are linearly independent. By a similar argument as above, we can show that 
\begin{align}
\mathbf{A}e_q&=\gamma_ie_i.
\end{align}

Since Eqns.~(\ref{EqnPartExpansionOfAr-2})  and (\ref{EqnFundamentalExpansionOfAr-2}) sets
\begin{align}
\mathbf{A}^{r-3}e_p&=p_1p_2\ldots p_{r-3}e_p +\text{NNOT},\quad\text{and}\\
\mathbf{A}^{r-3}e_q&=q_1q_2\ldots q_{r-3}e_q +\text{NNOT},
\end{align}
we can now write
\begin{align}
\label{EqnFinalExpansionAr-2eo}
\mathbf{A}^{r-2}e_p&=p_1p_2\ldots p_{r-3}\mathbf{A}e_p +\text{NNOT}\nonumber\\
&=p_1p_2\ldots p_{r-3}\beta_ie_i+\text{NNOT}\quad \text{and}\\
\label{EqnFinalExpansionAr-2es}
\mathbf{A}^{r-2}e_q&=q_1q_2\ldots q_{r-3}\mathbf{A}e_q+\text{NNOT}\nonumber\\
&=q_1q_2\ldots q_{r-3}\gamma_ie_i+\text{NNOT},
\end{align}
which further implies
\begin{align}
\label{EqnFinale_oExpansion}
e_p&=\mathbf{A}^{r-1}e_p=p_1p_2\ldots p_{r-3}\beta_i\mathbf{A}e_i +\text{NNOT}\quad\text{and}\\
e_q&=\mathbf{A}^{r-1}e_q=q_1q_2\ldots q_{r-3}\gamma_i\mathbf{A}e_i+\text{NNOT}.
\end{align}
Expanding Eqn.~(\ref{EqnFinale_oExpansion}) for $\mathbf{A}e_i$ using Eqn.~(\ref{EqnExpansionOfAei}), we get
\begin{align}
e_p&=p_1p_2\ldots p_{r-3}\beta_i\left(\alpha_p^ie_p+\alpha_q^ie_q+\sum_{j\neq p,q}\alpha_j^ie_j\right)
\end{align}
which is a contradiction to the linear independence of $e_p$ and $e_q$ as $p_1p_2\ldots p_{r-3}\beta_i\alpha_q^i >0$. Therefore, the following must hold
\begin{align}
\mathbf{A}e_i&=\alpha_j^ie_j,
\end{align}
for a unique $j$ such that $\alpha_j^i>0$. This proves our claim that $\mathbf{A}e_i$ must lie in the linear span of exactly one $e_j$. 

Since the claim holds for all $e_i$, it follows that $\mathbf{A}e_i$ lies in the linear span of $e_j$, $\mathbf{A}^2e_i=\alpha_j^i\mathbf{A}e_j$ lies in the linear span of $e_k$ (for some $k$), and so on. However, since $\mathbf{A}^{r-1}e_i=e_i$, if we keep successively applying $\mathbf{A}$ to a basis function $e_i$, $e_i$ must repeat in a cyclic fashion (at least once for every $r-1$ applications of $\mathbf{A}$).  Please note that for at least one $e_i$, the frequency of this cyclic repetition should be $r-1$, else $\mathbf{A}$ would not be $r$-potent. 

Now, if $\text{dim}\ R(\mathbf{A})=r-1$, then $\{e_1,e_2,\ldots,e_{r-1}\}$ would be a nonnegative orthogonal basis for $R(\mathbf{A})$ and $\mathbf{A}$ would not be decomposable. Therefore, in order to have decomposability of an $r$-potent (but not $k$-potent for $k<r$) operator, the $\text{dim}\ R(\mathbf{A})$ must be greater than $r-1$. 

If $\text{dim}\ R(\mathbf{A})$ is indeed greater than $r-1$, then 
\begin{align}
\mathcal{L}^2\left(\underbrace{\text{Supp}\ (e_i+\mathbf{A}e_i+\mathbf{A}^2e_i+\cdots +\mathbf{A}^{r-2}e_i)}_{\mathcal{U}}\right)\nonumber
\end{align}
would be the required decomposing space with $\mu(\mathcal{U})\cdot\mu(\mathcal{U}^c)>0$. Specifically,
\begin{align}
\langle e_i+\mathbf{A}e_i+\mathbf{A}^2e_i+\cdots +\mathbf{A}^{r-2}e_i,\chi_{\mathcal{U}^c}\rangle &=0\\
\Rightarrow \langle\mathbf{A}^{r-1}e_i+\mathbf{A}e_i+\mathbf{A}^2e_i+\cdots +\mathbf{A}^{r-2}e_i,\chi_{\mathcal{U}^c}\rangle &=0\\
\Rightarrow \langle e_i+\mathbf{A}e_i+\mathbf{A}^2e_i+\cdots +\mathbf{A}^{r-2}e_i,\mathbf{A}^*\chi_{\mathcal{U}^c}\rangle &=0\\
\Rightarrow\int_{\mathcal{X}}\left(e_i+\mathbf{A}e_i+\cdots +\mathbf{A}^{r-2}e_i\right)(x)\mathbf{A}^*\chi_{\mathcal{U}^c}(x)\mu(\mathrm{d}x)&=0
\end{align}
which implies
\begin{align}
\mathbf{A}^*\chi_{\mathcal{U}^c}(x)&=0\quad \text{a.e. on $\mathcal{U}$}\\
\Rightarrow \langle\mathbf{A}^*\chi_{\mathcal{U}^c},\chi_{\mathcal{U}}\rangle &=0\\
\Rightarrow \langle\chi_{\mathcal{U}^c},\mathbf{A}\chi_{\mathcal{U}}\rangle &=0,
\end{align}
which, by Defn.~\ref{definitionOfDecomposabilityByAlkaMarwaha}, implies that the nonnegative $r$-potent operator $\mathbf{A}$ is decomposable. 
\end{proof}


\end{document}